\newtheorem{theorem}{Theorem}[section]
\newtheorem{corollary}[theorem]{Corollary}
\newtheorem{lemma}[theorem]{Lemma}
\newtheorem{definition}[theorem]{Definition}
\newtheorem*{definition*}{Definition}
\begin{document}
\title{Expanding phenomena over higher dimensional matrix rings}

\author{ Nguyen Van The \thanks{VNU University of Science, Vietnam National University, Hanoi. Email: nguyenvanthe\_t61@hus.edu.vn} \and Le Anh Vinh\thanks{Vietnam Institute of Educational Sciences. Email: vinhle@vnies.edu.vn}}
\date{}
\maketitle  
\begin{abstract}

In this paper, we study the expanding phenomena in the setting of higher dimensional matrix rings. More precisely, we obtain a sum-product estimate for large subsets and show that $x(y+z)$, $x+yz$, $xy + z + t$ are moderate expanders over the matrix ring $M_n(\mathbb{F}_q)$.

These results generalize recent results of Y. D. Karabulut, D. Koh, T. Pham, C-Y. Shen, and the second listed author.

\end{abstract}

\section{Introduction}

Let $\mathbb{F}_q$ be a finite field of order $q$ where $q$ is an odd prime power. Given a function $f \colon \mathbb{F}_q^d\to \mathbb{F}_q$, define 
\[f(A, \ldots, A)=\{f(a_1, \ldots, a_d)\colon a_1, \ldots, a_d\in A\},\]
the image of the set $A^d\subset \mathbb{F}_q^d$ under the function $f$. We will use the following definition of expander polynomials, which can be found in \cite{hls}. 
\bigskip
\begin{definition}
Let $f$ be a function from $\mathbb{F}_q^d$ to $\mathbb{F}_q$. 
\begin{itemize}
\item[1.] The function $f$ is called a strong expander with the exponent $\varepsilon>0$ if for all $A\subset \mathbb{F}_q$ with $|A|\gg q^{1-\varepsilon}$, one has $|f(A, \ldots, A)|\ge q-k$ for some fixed constant $k$. 
\item[2.] The function $f$ is called a moderate expander with the exponent $\varepsilon>0$ if for all $A\subset \mathbb{F}_q$ with $|A|\gg q^{1-\varepsilon}$, one has $|f(A, \ldots, A)|\gg q$.
\end{itemize}
\end{definition}
Here and throughout, $X \ll Y$ means that there exists  some absolute constant $C_1>0$ such that $X \leq C_1Y$, $X \gtrsim Y$ means $X\gg (\log Y)^{-C_2} Y$ for some absolute constant $C_2>0$, and  $X\sim Y$ means $Y\ll X\ll Y$.

The study of expander polynomials over finite fields has been of much research interest in recent years. For two variable expanders, Tao \cite{tao} gave a general result that for any polynomial $P(x, y)\in \mathbb{F}_q[x, y]$ that is not one of the forms $Q(F_1(x)+F_2(y))$ and $Q(F_1(x)F_2(y))$ for some one variable polynomials $Q, F_1, F_2$, we have 
\[|P(A, A)|\gg q,\]
under the assumption $|A|\gg q^{1-\frac{1}{16}}$. This implies that such polynomials $P(x,y)$ are moderate expanders with the exponent $\varepsilon=1/16$. 

For three variable expanders over prime fields, Pham, de Zeeuw, and the second listed author \cite{pham} showed that any quadratic polynomial in three variables $P\in \mathbb{F}_p[x,y,z]$ that is not independent on any variable and that does not have the form $G(H(x)+K(y)+L(z))$ for some one variable polynomials $G, H, K, L$ is a moderate expander with exponent $\varepsilon=1/3$. The exponent $1/3$ can be improved in the case of rational function expanders. More precisely, Rudnev, Shkredov, and Stevens \cite{RSS} showed that the function $(xy-z)(x-t)^{-1}$ is a moderate expander with $\varepsilon=17/42$ over prime fields.

Over general finite fields, for three and four variable expanders, there are several families of moderate expanders over finite fields with exponent $\varepsilon = 1/3$ have been constructed by various authors. For example, $(x - y)(z-t)$ by Bennett, Hart, Iosevich, Pakianathan, and Rudnev \cite{bennett}, $xy+zt$ by Hart and Iosevich \cite{ha2}, $x+yz$ by Shparlinski in \cite{shpas}, $x(y+z)$ and $x+(y-z)^2$ by the second listed author \cite{vinh2}. Using methods from spectral graph theory, one can break the exponent $1/3$ by showing that $(x-y)^2+zt$ is a moderate expander with $\varepsilon=3/8$ \cite{vinh2}.

Let $M_n(\mathbb{F}_q)$ be the set of $n \times n$ matrices with entries in $\mathbb{F}_q$. Let $f\colon M_n(\mathbb{F}_q)^d\to M_n(\mathbb{F}_q)$ be a function in $n$ variables. For $A_1, \ldots, A_d\subset M_n(\mathbb{F}_q)$, we define 
\[f(A_1, \ldots, A_d):=\left\lbrace f(a_1, \ldots, a_d)\colon a_i\in A_i, ~1\le i\le d\right\rbrace,\]
the image of the set $A_1 \times A_2 \times \ldots \times A_d \subset \mathbb{F}_q^d$ under the function $f$. 
Similarly, we have the following definition of expanders over the matrix ring $M_n(\mathbb{F}_q),$ which can be found in \cite{vinh}.
\bigskip
\begin{definition}
Let $f$ be a function in $d$ variables from $M_n(\mathbb{F}_q)^d$ to $ M_n(\mathbb{F}_q).$
\begin{itemize}
	\item[1.] The function $f$ is called a \textit{strong expander} over $M_n(\mathbb{F}_q)$ with the exponent $\varepsilon>0$ if for all $A\subset M_n(\mathbb{F}_q)$ with $|A|\gg q^{n^2 - \varepsilon}$, one has $f(A, \ldots, A)\supset GL_n(\mathbb{F}_q)$.
	\item[2.] The function $f$ is called a \textit{moderate expander} over $M_n(\mathbb{F}_q)$ with the exponent $\varepsilon>0$ if for all $A\subset M_n(\mathbb{F}_q)$ with $|A|\gg q^{n^2-\varepsilon}$, one has $|f(A, \ldots, A)|\gg q^{n^2}$. 
	\item[3.] The function $f$ is called a \textit{strong expander} over $SL_n(\mathbb{F}_q)$ with the exponent $\varepsilon > 0$ if for all $A \subset SL_n(\mathbb{F}_q)$ with $|A| \gg q^{n^2-1-\varepsilon},$ one has $f(A,A,\dots,A) \supset GL_n(\mathbb{F}_q).$ 
	\item[4.] The function $f$ is called a \textit{moderate expander} over $M_n(\mathbb{F}_q)$ with the exponent $\varepsilon>0$ if for all $A\subset SL_n(\mathbb{F}_q)$ with $|A|\gg q^{n^2-1-\varepsilon}$, one has $|f(A, \ldots, A)|\gg q^{n^2}$.
\end{itemize}
\end{definition}

The first strong expander polynomial over $M_2(\mathbb{F}_q)$ was given by Ferguson, Hoffman, Luca, Ostafe, and Shparlinski \cite{shpa}. More precisely, they showed that $(x-y)(z-t)$ is a strong expander over $M_2(\mathbb{F}_q)$ with $\varepsilon=1/4$. Similar results in the setting of Heisenberg group over prime fields for small sets were obtained recently by Hegyv\'{a}ri and Hennecart in \cite{HH}. Some generalizations can be found in \cite{koh2, koh3,vinh3,vinh4}.

In \cite{vinh}, Y. D. Karabulut, D. Koh, T. Pham, C-Y. Shen, and the second listed author constructed some families of moderate expanders over $M_2(\mathbb{F}_q)$. More precisely, they showed that $x+yz$, $x(y+z)$ are moderate expanders over $SL_2(\mathbb{F}_q)$ and $M_2(\mathbb{F}_q)$ with the exponent $1/2$ and $xy + z + t$ is strong expander over $M_2(\mathbb{F}_q)$ with exponent $1/4$.

The main purpose of this paper is to extend these results to the setting of $M_n(\mathbb{F}_q)$ for any $n \ge 3$. More precisely, we have the following results.

\begin{theorem} \label{main.theo1}
Let $f(x,y,z)=x+yz$ be a function from $M_n(\mathbb{F}_q) \times SL_n(\mathbb{F}_q) \times SL_n(\mathbb{F}_q)$ to $M_n(\mathbb{F}_q).$ For $A \subset M_n(\mathbb{F}_q)$ and $B,C \subset SL_n(\mathbb{F}_q)$ with $n \ge 3,$ we have
\[ |f(A,B,C)| \gg \min\left\{q^{n^2},q^{2n}|A|,\frac{|A||B|^2|C|^2}{q^{4n^2-4n-2}}, \frac{|B||C|}{q^{n^2-n-1}}\right\}.\]
\end{theorem}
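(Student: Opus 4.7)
The strategy is the standard second-moment (Cauchy--Schwarz plus additive Fourier) method on the ring $(M_n(\Fq),+)$. For $s\in M_n(\Fq)$ let $r(s)=|\{(a,b,c)\in A\times B\times C:a+bc=s\}|$, so $\sum_s r(s)=|A||B||C|$ and Cauchy--Schwarz gives $|f(A,B,C)|\ge|A|^2|B|^2|C|^2/E$ with $E=\sum_s r(s)^2$. Writing $r=\mathbf 1_A\ast r_{BC}$ as an additive convolution on $M_n(\Fq)$, where $r_{BC}(g)=|\{(b,c)\in B\times C:bc=g\}|$, and applying Parseval with the characters $\chi_\xi(M)=\chi(\mathrm{tr}(\xi M))$, one gets
\[
E=q^{-n^2}\sum_{\xi\in M_n(\Fq)}|\widehat{\mathbf 1_A}(\xi)|^2\,|\widehat{r_{BC}}(\xi)|^2.
\]
The $\xi=0$ term contributes $q^{-n^2}|A|^2|B|^2|C|^2$, producing the $q^{n^2}$ entry of the minimum, while using $\sum_\xi|\widehat{\mathbf 1_A}(\xi)|^2=q^{n^2}|A|$ the remaining terms will yield the $\frac{|A||B|^2|C|^2}{q^{4n^2-4n-2}}$ entry once one establishes a uniform bound $|\widehat{r_{BC}}(\xi)|^2\ll q^{4n^2-4n-2}$ for $\xi\ne 0$.

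To prove this Fourier bound I would apply Cauchy--Schwarz in $b$, extending the range from $B$ to $SL_n(\Fq)$, and expand the square so that the inner sum becomes $\sum_{b\in SL_n(\Fq)}\chi(\mathrm{tr}(\eta b))$ with $\eta=(c-c')\xi$. The case $\eta=0$ is counted using $|\{d\in M_n(\Fq):d\xi=0\}|=q^{n(n-\mathrm{rank}(\xi))}$, while for $\eta\ne 0$ the resulting Kloosterman-type sum is bounded by realizing $SL_n$ as the fiber $\det^{-1}(1)\subset GL_n$, applying inclusion--exclusion on the singular locus, and using $\sum_{b\in M_n(\Fq)}\chi(\mathrm{tr}(\eta b))=q^{n^2}\mathbf 1_{\eta=0}$. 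A rank-by-rank analysis of $\xi$ then completes the estimate; the low-rank cases call for a rank factorization $\xi=UV^{\!\top}$ together with the quasirandomness of $SL_n(\Fq)$ via the Landazuri--Seitz bound $\ge(q^n-q)/(q-1)$ on the minimal nontrivial representation dimension.

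The remaining two entries come from separate arguments. The bound $|f(A,B,C)|\gg|B||C|/q^{n^2-n-1}$ follows from the trivial inequality $|f(A,B,C)|\ge|BC|$ together with the Gowers / Babai--Nikolov--Pyber product-set estimate $|BC|\gg\min(|SL_n(\Fq)|,|B||C|/q^{n^2-n-1})$, itself a direct consequence of the quasirandomness of $SL_n(\Fq)$. The entry $q^{2n}|A|$ I expect to obtain from a pointwise bound of the shape $\max_g r_{BC}(g)\ll q^{n^2-2n-1}|B||C|$, which upgrades $|A+BC|\ge|A|$ to $|A+BC|\gg q^{2n}|A|$ via pigeonhole on the fibres of the addition map $A\times BC\to A+BC$. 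The principal obstacle is the low-rank Fourier estimate: naive Cauchy--Schwarz loses the factor $q^{n(n-\mathrm{rank}(\xi))}$, which for $\mathrm{rank}(\xi)=1$ would destroy the target bound, so one must genuinely exploit the bilinear structure of $\mathrm{tr}(\xi bc)$ together with the spectral gap of $SL_n(\Fq)$ in the low-rank regime.
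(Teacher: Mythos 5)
Your proposal takes a genuinely different route from the paper, but it has substantive gaps. The paper does not set up any Fourier-analytic energy count at all: it writes $f(A,B,C)=A+BC$, notes that $BC\subset SL_n(\mathbb{F}_q)$, then applies two black-box estimates in sequence, namely the Babai--Nikolov--Pyber product growth bound $|BC|\gg\min\{q^{n^2-1},\,|B||C|/q^{n^2-n}\}$ (Theorem \ref{growthSLn}) and T.~Pham's sum growth bound $|X+Y|\gg\min\{|X|q,\,|X|^2|Y|/q^{2n^2-2n-2}\}$ for $X\subset SL_n(\mathbb{F}_q)$, $Y\subset M_n(\mathbb{F}_q)$ (Theorem \ref{sum-growth}), with $X=BC$ and $Y=A$. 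Expanding the nested minima immediately yields all four terms of the conclusion. Your route, by contrast, would require re-deriving a version of Pham's theorem from scratch via a Plancherel identity on $M_n(\mathbb{F}_q)$ plus a uniform Fourier bound on $r_{BC}$.

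The most serious gap is the one you flag yourself: the uniform estimate $|\widehat{r_{BC}}(\xi)|^2\ll q^{4n^2-4n-2}$ for all $\xi\neq 0$ is not established, and the strategy you sketch (Cauchy--Schwarz in $b$, extending to $SL_n$, inclusion--exclusion on singular matrices) visibly loses a factor $q^{n(n-\mathrm{rank}\,\xi)}$ that is fatal for $\mathrm{rank}\,\xi=1$; invoking ``the bilinear structure of $\mathrm{tr}(\xi bc)$ together with the spectral gap'' names the obstruction without resolving it. There is also a quantitative slip in the $|B||C|/q^{n^2-n-1}$ branch: the Babai--Nikolov--Pyber estimate gives $|BC|\gg\min\{q^{n^2-1},\,|B||C|/q^{n^2-n}\}$, not $|B||C|/q^{n^2-n-1}$, so the ``trivial inequality $|f(A,B,C)|\ge|BC|$'' is off by a factor of $q$ --- in the paper that extra $q$ comes from the $|X|q$ branch of the sum growth theorem, not from the product estimate alone. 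Finally, the route you propose for the $q^{2n}|A|$ term does not go through as stated: a pointwise bound $\max_g r_{BC}(g)\ll q^{n^2-2n-1}|B||C|$ controls $|BC|$ from below, not $|A+BC|/|A|$, and pigeonhole on fibres of the addition map does not produce a factor of $|A|$. In the paper this term arises from $|X|^2|Y|/q^{2n^2-2n-2}$ with $|X|=|BC|\gg q^{n^2-1}$, i.e.\ from the $L^2$ content of Pham's theorem, which the Cauchy--Schwarz upgrade you describe does not reproduce.
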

As a direct consequence, the following corollary shows that $f=x+yz$ is a moderate expander over $SL_n(\mathbb{F}_q)$ with the exponent $\varepsilon_n = \frac{n-1}{2}.$ 
\begin{corollary}
Let $f(x,y,z) = x + yz$ be a function from $SL_n(\mathbb{F}_q)^3$ to $M_n(\mathbb{F}_q).$ For $A \subset SL_n(\mathbb{F})$ with $|A| \gg q^{n^2-1-\varepsilon_n}$ where $\varepsilon_n = \frac{n-1}{2},$ we have 
\[ |f(A,A,A)| \gg q^{n^2}.\]
\end{corollary}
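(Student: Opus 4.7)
The plan is simply to specialize Theorem~\ref{main.theo1} to the case $A = B = C \subset SL_n(\mathbb{F}_q)$ and verify that, under the hypothesis $|A| \gg q^{n^2 - 1 - \varepsilon_n}$ with $\varepsilon_n = (n-1)/2$, each of the four arguments of the minimum on the right-hand side is at least a constant multiple of $q^{n^2}$. The first argument is $q^{n^2}$ itself, so the work reduces to the remaining three terms.

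Writing $|A| = q^{n^2 - 1 - \varepsilon}$ for shorthand, I would substitute into each of the remaining bounds. The second term $q^{2n}|A|$ becomes $q^{n^2 + 2n - 1 - \varepsilon}$, which dominates $q^{n^2}$ whenever $\varepsilon \le 2n - 1$. The third term $|A|^5 / q^{4n^2 - 4n - 2}$ becomes $q^{n^2 + 4n - 3 - 5\varepsilon}$, giving the threshold $\varepsilon \le (4n-3)/5$. The fourth term $|A|^2 / q^{n^2 - n - 1}$ becomes $q^{n^2 + n - 1 - 2\varepsilon}$, giving $\varepsilon \le (n-1)/2$.

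Next I would compare these three exponent thresholds and confirm that $(n-1)/2$ is the smallest among them for every $n \ge 3$. The inequality $(n-1)/2 \le 2n - 1$ reduces to $3n \ge 1$, and $(n-1)/2 \le (4n-3)/5$ reduces to $3n \ge 1$ as well, both trivially satisfied. Consequently, the choice $\varepsilon_n = (n-1)/2$ is exactly the largest value of $\varepsilon$ that simultaneously makes all four terms in the minimum $\gg q^{n^2}$, and the corollary follows.

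There is no genuine obstacle here, as the corollary is a routine bookkeeping consequence of Theorem~\ref{main.theo1}. The only mildly subtle point is identifying which of the four bounds is binding: it is the fourth term $|B||C|/q^{n^2 - n - 1}$, so the exponent $\varepsilon_n = (n-1)/2$ in the corollary ultimately reflects the weakest of the three nontrivial bounds supplied by Theorem~\ref{main.theo1}.
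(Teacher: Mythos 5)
Your proof is correct and is exactly the intended argument: the paper presents this corollary as a ``direct consequence'' of Theorem~\ref{main.theo1}, and your substitution of $|A|=|B|=|C| \gg q^{n^2-1-\varepsilon_n}$ into the four-term minimum, followed by the observation that the fourth term $|B||C|/q^{n^2-n-1}$ gives the binding threshold $\varepsilon \le (n-1)/2$, is precisely the bookkeeping the authors leave to the reader. No differences from the paper's approach.
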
 
We also have the following corollary which shows that $f=x+yz$ is a moderate expander over $M_n(\mathbb{F}_q)$ with the exponent one.
\begin{corollary}\label{main.coro1}
Let $f(x,y,z)=x+yz$ be a function from $M_n(\mathbb{F}_q)^3$ to $M_n(\mathbb{F}_q).$ For $A \subset M_n(\mathbb{F}_q)$ with $|A| \gg q^{n^2-1},$ we have 
\[ |f(A,A,A)| \gg q^{n^2}.\] 
\end{corollary}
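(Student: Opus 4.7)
The plan is to reduce Corollary \ref{main.coro1} to Theorem \ref{main.theo1}. The main obstacle is that $A\subset M_n(\mathbb{F}_q)$ may contain singular matrices and is not confined to any determinant coset, whereas Theorem \ref{main.theo1} requires the two multiplicative slots of $f$ to lie in $SL_n(\mathbb{F}_q)$. The idea is to pigeonhole on the determinant to extract a subset of $A$ of size $\gg q^{n^2-2}$ sitting inside a single coset $D_0\cdot SL_n(\mathbb{F}_q)$, and then absorb the coset representative $D_0$ by right-translating the additive slot.

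First I would recall the standard count $|GL_n(\mathbb{F}_q)|=\prod_{j=1}^n(q^n-q^{j-1})=q^{n^2}\bigl(1-O(q^{-1})\bigr)$, which yields $|M_n(\mathbb{F}_q)\setminus GL_n(\mathbb{F}_q)|\ll q^{n^2-1}$. Taking the implicit constant in the hypothesis $|A|\gg q^{n^2-1}$ sufficiently large, this gives $|A\cap GL_n(\mathbb{F}_q)|\gg q^{n^2-1}$. Partitioning $A\cap GL_n(\mathbb{F}_q)=\bigsqcup_{d\in\mathbb{F}_q^\ast}A_d$ according to the value of the determinant and using the pigeonhole principle, I pick $d_0\in\mathbb{F}_q^\ast$ with $|A_{d_0}|\gg q^{n^2-2}$. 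Fix any $D_0\in GL_n(\mathbb{F}_q)$ with $\det D_0=d_0$ and set
\[B:=A_{d_0}D_0^{-1}\subset SL_n(\mathbb{F}_q),\qquad C:=D_0^{-1}A_{d_0}\subset SL_n(\mathbb{F}_q),\]
so that every $y\in A_{d_0}$ factors as $y=bD_0$ with $b\in B$, and every $z\in A_{d_0}$ as $z=D_0c$ with $c\in C$. Setting $T:=D_0^2$ and $C':=TCT^{-1}\subset SL_n(\mathbb{F}_q)$, the product becomes $yz=bD_0^2c=b(TcT^{-1})T=bc'T$ with $c'\in C'$, hence $A_{d_0}A_{d_0}=BC'\cdot T$. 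Since right-multiplication by $T^{-1}$ is a bijection of $M_n(\mathbb{F}_q)$, letting $A':=AT^{-1}$ gives
\[|f(A,A,A)|\;\geq\;|A+A_{d_0}A_{d_0}|\;=\;|A+BC'T|\;=\;|A'+BC'|\;=\;|f(A',B,C')|,\]
where $A'\subset M_n(\mathbb{F}_q)$ and $B,C'\subset SL_n(\mathbb{F}_q)$ satisfy $|A'|=|A|\gg q^{n^2-1}$ and $|B|=|C'|\gg q^{n^2-2}$.

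Finally I would apply Theorem \ref{main.theo1} and verify that each of the four quantities inside its $\min$ is $\gg q^{n^2}$ under these sizes and $n\geq 3$. Direct substitution gives $q^{2n}|A'|\gg q^{n^2+2n-1}$, $|A'||B|^2|C'|^2/q^{4n^2-4n-2}\gg q^{n^2+4n-7}$, and the binding term $|B||C'|/q^{n^2-n-1}\gg q^{n^2+n-3}$. The last is $\gg q^{n^2}$ precisely when $n\geq 3$, which both matches the hypothesis of Theorem \ref{main.theo1} and explains why the corollary's exponent is exactly $1$. The conceptual work is entirely absorbed into Theorem \ref{main.theo1}; the only subtlety is the bookkeeping with $D_0$, which the double factorization $A_{d_0}=BD_0=D_0C$ handles without any loss of cardinality.
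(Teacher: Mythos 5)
Your argument is correct, and it takes a genuinely different route from the paper's. The paper never invokes Theorem \ref{main.theo1} as a black box for this corollary; instead, after pigeonholing to a single determinant class $A'\subset D_\alpha$ with $|A'|\gg q^{n^2-2}$, it re-derives the ingredients directly: it applies Lemma \ref{main-lemma} together with Theorem \ref{growthSLn} to lower-bound $|A'A'|$, and then applies a translated version of Theorem \ref{sum-growth} (the inline estimate \eqref{det.a}, obtained by left-multiplying by $l_\alpha$) to lower-bound $|A'+A'A'|$. Your version instead builds the factorization $y=bD_0$, $z=D_0c$ with $B=A_{d_0}D_0^{-1}\subset SL_n(\mathbb{F}_q)$ and $C=D_0^{-1}A_{d_0}\subset SL_n(\mathbb{F}_q)$, and uses the conjugation $D_0^2c=(D_0^2cD_0^{-2})D_0^2=c'T$ to push the residual factor $T=D_0^2$ all the way to the right, where it is killed by a single right translation of the additive set. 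This renders the three slots of $f$ in exactly the shape required by Theorem \ref{main.theo1}, so the corollary really is a corollary. Both approaches lead to the same binding term $q^{n^2+n-3}$, and your observation that this is the precise reason for the threshold $n\geq 3$ is correct and matches the paper's (hidden) arithmetic. The one stylistic difference is that you keep all of $A$ in the additive slot ($A+A_{d_0}A_{d_0}$) while the paper downgrades to $A'+A'A'$; this affects none of the conclusions since $|A'|\gg q^{n^2-2}$ already suffices, but your choice is marginally tighter. Your reduction is, if anything, cleaner than the paper's.
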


\begin{theorem}\label{main.theo2}
Let $f(x,y,z) = x(y+z)$ be a function from $SL_n(\mathbb{F}_q) \times SL_n(\mathbb{F}_q) \times M_n(\mathbb{F}_q)$ to $M_n(\mathbb{F}_q).$ For $A, B \subset SL_n(\mathbb{F}_q), C \subset M_n(\mathbb{F}_q)$ with $|B| \gg q^{n^2-2}$ and $|C| \gg q^{(n-1)^2},$ we have 
\[ |f(A,B,C)| \gg \min\left\{ q^{n^2}, \frac{|A||B|^2|C|}{q^{3n^2-3n-2}}, \frac{|A||B|}{q^{n^2-n-1}}\right\}. \]
\end{theorem}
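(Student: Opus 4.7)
My plan is to follow the same Cauchy--Schwarz and Fourier-analytic strategy as in Theorem \ref{main.theo1}, but with the roles of addition and multiplication exchanged. First, I would define the representation function
\[
r(w) \;:=\; \#\{(a,b,c) \in A \times B \times C : a(b+c) = w\},
\]
so that $\sum_w r(w) = |A||B||C|$, and apply Cauchy--Schwarz to obtain
\[
|f(A,B,C)| \;\geq\; \frac{|A|^2|B|^2|C|^2}{E}, \qquad E := \sum_w r(w)^2.
\]
Here $E$ counts the number of $6$-tuples $(a,b,c,a',b',c') \in A^2 \times B^2 \times C^2$ with $a(b+c) = a'(b'+c')$, and the task reduces to giving a sharp upper bound on $E$.

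Since $a' \in SL_n(\mathbb{F}_q)$ is invertible, the equation is equivalent to $u(b+c) = b'+c'$ with $u := (a')^{-1}a \in SL_n(\mathbb{F}_q)$. Setting $r_A(u) := \#\{(a,a') \in A^2 : (a')^{-1}a = u\}$ (so $\sum_u r_A(u) = |A|^2$) and $T(u) := \#\{(b,b',c,c') \in B^2 \times C^2 : u(b+c) = b' + c'\}$, one obtains
\[
E \;=\; \sum_{u \in SL_n(\mathbb{F}_q)} r_A(u)\, T(u).
\]
To estimate $T(u)$, identify $M_n(\mathbb{F}_q)$ with the additive group $\mathbb{F}_q^{n^2}$ and use the additive characters $\chi_\xi(m) = e_p(\operatorname{Tr}(\xi m))$ indexed by $\xi \in M_n(\mathbb{F}_q)$. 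Orthogonality and the cyclic property of the trace yield
\[
T(u) \;=\; \frac{1}{q^{n^2}} \sum_{\xi \in M_n(\mathbb{F}_q)} \widehat{\mathbf{1}_B}(\xi u)\,\widehat{\mathbf{1}_C}(\xi u)\,\overline{\widehat{\mathbf{1}_B}(\xi)}\,\overline{\widehat{\mathbf{1}_C}(\xi)},
\]
where $\widehat{\mathbf{1}_X}(\xi) := \sum_{x \in X} \chi_\xi(x)$. The $\xi = 0$ term gives the main contribution $|B|^2|C|^2 / q^{n^2}$; the nonzero frequencies would be handled by Parseval together with the spectral bounds for the $SL_n(\mathbb{F}_q)$-action Cayley graph on $M_n(\mathbb{F}_q)$, i.e.\ the same machinery that underlies Theorem \ref{main.theo1}.

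Summing $r_A(u) T(u)$ over $u \in SL_n(\mathbb{F}_q)$, the diagonal case $u = I$ combined with the main term of $T$ is expected to yield the bound $q^{n^2}$, the off-diagonal regular case should produce $|A||B|^2|C|/q^{3n^2-3n-2}$, and the contribution from singular sumset elements $b+c$ will produce the last term $|A||B|/q^{n^2-n-1}$. The principal obstacle will be precisely this singular contribution: when $b+c$ has rank less than $n$, the left-multiplication orbit $SL_n(\mathbb{F}_q) \cdot (b+c)$ shrinks to size $\sim q^{n^2-n-1}$ (in the rank-$(n-1)$ case), the Fourier/spectral estimates degrade accordingly, and one must argue directly that rank-deficient $b+c$ cannot dominate. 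The hypotheses $|B| \gg q^{n^2-2}$ and $|C| \gg q^{(n-1)^2}$ appear precisely to ensure that the rank-deficient part of $B+C$ is small enough that its contribution to $E$ is absorbed into the third term of the minimum.
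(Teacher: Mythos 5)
Your Cauchy--Schwarz/Fourier program is a genuinely different route from the paper's, and as written it has real gaps. The paper's proof of Theorem~\ref{main.theo2} is elementary and does not touch characters or spectral graph theory at all: it first applies Theorem~\ref{sum-growth} (Pham's sum-growth bound for $SL_n$ plus an arbitrary set) to get $|B+C|\gg\min\{|B|q,\,|B|^2|C|/q^{2n^2-2n-2}\}\gg q^{n^2-1}$ under the stated hypotheses on $|B|,|C|$, so the singular part $D_0$ of $B+C$ may be discarded; it then slices the remainder of $B+C$ by determinant into fibres $D_\alpha$, observes that $AD_\alpha\cap AD_\beta=\emptyset$ for $\alpha\ne\beta$, uses Lemma~\ref{main-lemma} to move each product $AD_\alpha$ into $SL_n(\mathbb{F}_q)\cdot SL_n(\mathbb{F}_q)$, applies Theorem~\ref{growthSLn} (Babai--Nikolov--Pyber) fibrewise, and sums over $\alpha\ne 0$. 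The three-term minimum in the conclusion is exactly what falls out of composing these two growth results; it is not presented as the outcome of an energy estimate.

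Two concrete problems with your sketch. First, your claim that ``the same machinery underlies Theorem~\ref{main.theo1}'' is false for this paper: Theorem~\ref{main.theo1} is proved by the same two black-box theorems (\ref{growthSLn} and \ref{sum-growth}) applied in the order $BC$-then-$A+BC$, with no additive characters involved, so there is no Cayley-graph spectral bound for you to appeal to here. Second, the Cauchy--Schwarz lower bound $|f(A,B,C)|\ge(|A||B||C|)^2/E$ does not obviously reproduce the bound in the statement: to obtain the term $|A||B|/q^{n^2-n-1}$ you would need $E\ll |A||B||C|^2 q^{n^2-n-1}$, and to obtain $|A||B|^2|C|/q^{3n^2-3n-2}$ you would need $E\ll |A||C|\,q^{3n^2-3n-2}$; neither is a shape that drops out of the $\xi=0$ term plus a Parseval tail, and you give no argument that $T(u)$ or $\sum_u r_A(u)T(u)$ satisfies such bounds. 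You have correctly identified that the rank-deficient part of $B+C$ is the obstruction and that the hypotheses $|B|\gg q^{n^2-2}$, $|C|\gg q^{(n-1)^2}$ exist to control it, but the mechanism by which the paper exploits this is far simpler than a spectral analysis of the $SL_n$-action: it is just the observation that $q^{n^2-1}\gg|D_0|$ lets one throw the singular fibre away. If you want to salvage your approach you would need to actually bound the off-diagonal frequencies and the singular-rank contribution quantitatively, and it is not clear this would match the exponents without in effect re-deriving the two growth theorems the paper simply cites.
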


As a direct consequence, the following corollary shows that $f=x(y+z)$ is a moderate expander over $SL_n(\mathbb{F}_q)$ with the exponent $\varepsilon=1.$ 
\begin{corollary}
Let $f(x,y,z) = x(y+z)$ be a function from $SL_n(\mathbb{F}_q)^3$ to $M_n(\mathbb{F}_q).$ For $A \subset SL_n(\mathbb{F})$ with $|A| \gg q^{n^2-2},$ we have 
\[ |f(A,A,A)| \gg q^{n^2}.\]
\end{corollary}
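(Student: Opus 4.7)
The plan is to obtain the corollary as an immediate specialization of Theorem \ref{main.theo2} by taking $B = C = A$. Note that this is legitimate because $SL_n(\mathbb{F}_q) \subset M_n(\mathbb{F}_q)$, so using the same subset $A$ of $SL_n(\mathbb{F}_q)$ in the role of $C$ (which Theorem \ref{main.theo2} allows to be any subset of $M_n(\mathbb{F}_q)$) is permitted. Under this substitution, the two technical size hypotheses of the theorem, $|B| \gg q^{n^2-2}$ and $|C| \gg q^{(n-1)^2}$, must each be checked against the single assumption $|A| \gg q^{n^2-2}$. The first is identical to this assumption, and for the second I would observe that $n^2-2 \geq (n-1)^2 = n^2-2n+1$ whenever $n \geq 2$, so it is automatic in the range $n \geq 3$ under which Theorem \ref{main.theo2} is stated.

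With the hypotheses verified, what remains is to substitute $|A|=|B|=|C|\gg q^{n^2-2}$ into the three terms inside the minimum and check that each of the last two is at least $q^{n^2}$. A short exponent count yields $|A|^4/q^{3n^2-3n-2} \gg q^{n^2+3n-6}$, which exceeds $q^{n^2}$ for every $n \geq 2$, and $|A|^2/q^{n^2-n-1} \gg q^{n^2+n-3}$, which exceeds $q^{n^2}$ exactly when $n \geq 3$. Consequently the minimum in Theorem \ref{main.theo2} is $\gg q^{n^2}$ under our hypothesis, and the corollary follows.

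There is really no obstacle to speak of: once Theorem \ref{main.theo2} is established, the corollary is a routine bookkeeping exercise with exponents. The only point worth highlighting is that the third term of the minimum is the binding constraint, and forcing $|A|^2 / q^{n^2-n-1} \gtrsim q^{n^2}$ when $|A| \gg q^{n^2-2}$ requires exactly $n \geq 3$. This explains why the exponent $\varepsilon = 1$, i.e.\ the threshold $|A| \gg q^{n^2-1-\varepsilon} = q^{n^2-2}$, is the natural one to extract from Theorem \ref{main.theo2} in the $SL_n(\mathbb{F}_q)$ setting.
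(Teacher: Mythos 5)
Your proposal is correct and is exactly the intended ``direct consequence'' route the paper has in mind: specialize Theorem \ref{main.theo2} to $B=C=A$, check that $|A|\gg q^{n^2-2}$ subsumes both side conditions (the $|C|\gg q^{(n-1)^2}$ one automatically for $n\ge 2$), and verify by exponent bookkeeping that both non-trivial terms $q^{n^2+3n-6}$ and $q^{n^2+n-3}$ dominate $q^{n^2}$ for $n\ge3$. Your observation that the third term $|A|^2/q^{n^2-n-1}$ is the binding one, pinning down $\varepsilon=1$ and $n\ge3$, is also the right reading of why the threshold is what it is.
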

We also have the following corollary which shows that $f=x(y+z)$ is a  moderate expander over $M_n(\mathbb{F}_q)$ with the exponent one.
\begin{corollary}\label{main.coro2}
Let $f(x,y,z)=x(y+z)$ be a function from $M_n(\mathbb{F}_q)^3$ to $M_n(\mathbb{F}_q).$ For $A \subset M_n(\mathbb{F}_q)$ with $|A| \gg q^{n^2-1},$ we have 
\[ |f(A,A,A)| \gg q^{n^2}.\] 
\end{corollary}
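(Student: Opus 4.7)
The plan is to reduce Corollary~\ref{main.coro2} to Theorem~\ref{main.theo2} by isolating a large piece of $A$ inside a single coset of $SL_n(\mathbb{F}_q)\subset GL_n(\mathbb{F}_q)$, and then using left-multiplication equivariance of $f(x,y,z)=x(y+z)$ to fit the arguments into the slots required by Theorem~\ref{main.theo2}. Since there are only $O(q^{n^2-1})$ singular matrices, by taking the implicit constant in $|A|\gg q^{n^2-1}$ large enough I may assume $A\subset GL_n(\mathbb{F}_q)$; pigeonholing over the $q-1$ cosets of $SL_n(\mathbb{F}_q)$ in $GL_n(\mathbb{F}_q)$ then produces some $\alpha\in\mathbb{F}_q^*$ with $A_\alpha:=\{M\in A:\det M=\alpha\}$ of size $|A_\alpha|\gg q^{n^2-2}$.

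Next I would fix any $M_\alpha\in GL_n(\mathbb{F}_q)$ with $\det M_\alpha=\alpha$ (for instance, any element of $A_\alpha$) and set
\[
B:=M_\alpha^{-1}A_\alpha\subset SL_n(\mathbb{F}_q),\quad B'':=M_\alpha^{-1}BM_\alpha\subset SL_n(\mathbb{F}_q),\quad A':=M_\alpha^{-1}A\subset M_n(\mathbb{F}_q),
\]
noting that $B''\subset SL_n(\mathbb{F}_q)$ because conjugation preserves the determinant, and that $|B|=|B''|\gg q^{n^2-2}$, $|A'|=|A|\gg q^{n^2-1}\ge q^{(n-1)^2}$, so the hypotheses of Theorem~\ref{main.theo2} are met. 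Since $f(A,A,A)\supseteq f(A_\alpha,A_\alpha,A)=A_\alpha\cdot(A_\alpha+A)$, the key cardinality-preserving chain, obtained by left-multiplying twice by $M_\alpha^{-1}$ and using the tautology $BM_\alpha=M_\alpha B''$ (equivalent to the definition of $B''$), is
\[
|A_\alpha(A_\alpha+A)|=|B(M_\alpha B+A)|=|(BM_\alpha)(B+A')|=|M_\alpha B''(B+A')|=|f(B'',B,A')|.
\]

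Finally I would invoke Theorem~\ref{main.theo2} for the triple $(B'',B,A')$; its third term alone gives
\[
|f(B'',B,A')|\gg \frac{|B''|\,|B|}{q^{n^2-n-1}}\gg q^{n^2+n-3},
\]
which dominates $q^{n^2}$ for $n\ge 3$, so $|f(A,A,A)|\gg q^{n^2}$ as required. The only genuine subtlety is the second change of variables: a naive right-multiplication substitution would leave the middle slot of $f$ as $M_\alpha^{-1}BM_\alpha^{-1}$ (outside $SL_n(\mathbb{F}_q)$); the conjugation trick $B\mapsto M_\alpha^{-1}BM_\alpha$ is precisely what keeps both the first and second slots inside $SL_n(\mathbb{F}_q)$ simultaneously, allowing Theorem~\ref{main.theo2} to apply. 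Once that observation is in place, the remainder is bookkeeping.
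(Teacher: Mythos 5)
Your argument is correct in substance and is a genuinely different route from the paper's. The paper does not black-box Theorem~\ref{main.theo2}: it first proves, inside Corollary~\ref{main.coro1}, a ``fixed-determinant'' variant of Theorem~\ref{sum-growth} (the inequality labelled~\eqref{det.a}), then after extracting a determinant-$\alpha$ slice $A'\subset A$ with $|A'|\gg q^{n^2-2}$ it \emph{reruns} the proof of Theorem~\ref{main.theo2} on $A'$: split $A'+A'$ by determinant, apply Lemma~\ref{main-lemma} plus Theorem~\ref{growthSLn} on each nonsingular slice, and sum. You instead perform an algebraic normalization: writing $A_\alpha=M_\alpha B$ and passing from $A_\alpha(A_\alpha+A)$ to $B(M_\alpha B+A)$ to $(BM_\alpha)(B+M_\alpha^{-1}A)$ to $M_\alpha B''(B+A')$ moves both multiplicative slots into $SL_n(\mathbb{F}_q)$ and the additive slot into $M_n(\mathbb{F}_q)$, so Theorem~\ref{main.theo2} can be cited directly. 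The conjugation $B\mapsto M_\alpha^{-1}BM_\alpha$ is exactly the right device and replaces the paper's reliance on Lemma~\ref{main-lemma}. Your route is more modular and arguably cleaner, since it quotes the theorem rather than replaying its internals.

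One small but genuine slip: Theorem~\ref{main.theo2} gives $|f(B'',B,A')|\gg\min\{q^{n^2},\,T_2,\,T_3\}$, and you conclude $|f|\gg q^{n^2}$ from ``the third term alone.'' A lower bound by a minimum does not let you pick a favorite term; you must check that \emph{each} of the non-trivial terms dominates $q^{n^2}$. Here the missing check is routine: with $|B''|,|B|\gg q^{n^2-2}$ and $|A'|\gg q^{n^2-1}$,
\[
\frac{|B''|\,|B|^2|A'|}{q^{3n^2-3n-2}}\gg q^{(n^2-2)+(2n^2-4)+(n^2-1)-(3n^2-3n-2)}=q^{n^2+3n-5}\ge q^{n^2}\quad(n\ge3),
\]
so the minimum is indeed $\gg q^{n^2}$. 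With this one-line addition your proof is complete.
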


In the following two theorems, we extend Theorems \ref{main.theo1} and \ref{main.theo2} for arbitrary sets in $M_n(\mathbb{F}_q)$
instead of the special linear group $SL_n(\mathbb{F}_q).$ 

\begin{theorem}\label{theo3}
Let $f(x,y,z) = x(y + z)$ be a function from $M_n(\mathbb{F}_q)^3$ to $M_n(\mathbb{F}_q).$ For $A,B,C \subset M_n(\mathbb{F}_q)$ with $|A||B||C| \gg q^{3n^2-1},$ we have
\[ \left|f(A,B,C)\right| \gg q^{n^2}.\]
\end{theorem}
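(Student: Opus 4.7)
The plan is to reduce Theorem \ref{theo3} to the already established Theorem \ref{main.theo2} through a double pigeonhole on the determinant map, exploiting the normality of $SL_n(\mathbb{F}_q)$ in $GL_n(\mathbb{F}_q)$. As a preliminary step, I dispose of singular matrices: since $|GL_n(\mathbb{F}_q)| = q^{n^2}\prod_{i=1}^{n}(1-q^{-i})$, the complement $M_n(\mathbb{F}_q)\setminus GL_n(\mathbb{F}_q)$ has size at most $2q^{n^2-1}$. The hypothesis $|A||B||C|\gg q^{3n^2-1}$ combined with $|A|,|B|,|C|\leq q^{n^2}$ forces each of $|A|,|B|,|C|$ to be $\gg q^{n^2-1}$, so by taking the implicit constant sufficiently large I may pass to $A\cap GL_n(\mathbb{F}_q)$, $B\cap GL_n(\mathbb{F}_q)$, $C\cap GL_n(\mathbb{F}_q)$, losing at most a constant factor in each size. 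WLOG, therefore, $A,B,C\subseteq GL_n(\mathbb{F}_q)$ with the multiplicative hypothesis retained.

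Next comes the determinant pigeonhole. Partition $A=\bigsqcup_{\lambda\in\mathbb{F}_q^*}A_\lambda$ and $B=\bigsqcup_{\mu\in\mathbb{F}_q^*}B_\mu$ by determinant value, and select $\lambda_0,\mu_0$ with $|A_{\lambda_0}|\gtrsim|A|/q$ and $|B_{\mu_0}|\gtrsim|B|/q$. Fix representatives $g_{\lambda_0},g_{\mu_0}\in GL_n(\mathbb{F}_q)$ with the corresponding determinants and write $A_{\lambda_0}=\tilde A\cdot g_{\lambda_0}$, $B_{\mu_0}=\tilde B\cdot g_{\mu_0}$ with $\tilde A,\tilde B\subseteq SL_n(\mathbb{F}_q)$. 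Setting $h:=g_{\lambda_0}g_{\mu_0}$, $\tilde B^*:=g_{\lambda_0}\tilde B g_{\lambda_0}^{-1}\subseteq SL_n(\mathbb{F}_q)$ (using normality of $SL_n$ in $GL_n$), and $C'':=g_{\lambda_0}Ch^{-1}$, a direct expansion of $a(b+c)h^{-1}$ with $a=\tilde a g_{\lambda_0}$, $b=\tilde b g_{\mu_0}$ yields the key identity
\[ A_{\lambda_0}(B_{\mu_0}+C)\,h^{-1} \;=\; \tilde A\bigl(\tilde B^*+C''\bigr), \]
with $|C''|=|C|$, so that $|A(B+C)|\geq|\tilde A(\tilde B^*+C'')|$.

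Finally, I apply Theorem \ref{main.theo2} to $\tilde A,\tilde B^*\subseteq SL_n(\mathbb{F}_q)$ and $C''\subseteq M_n(\mathbb{F}_q)$; its hypotheses hold since $|\tilde B^*|=|B_{\mu_0}|\gtrsim|B|/q\gg q^{n^2-2}$ and $|C''|=|C|\gtrsim q^{n^2-1}\gg q^{(n-1)^2}$ for $n\geq 2$. The third branch of the minimum in the conclusion of Theorem \ref{main.theo2} then yields
\[ \frac{|\tilde A||\tilde B^*|}{q^{n^2-n-1}}\;\gtrsim\;\frac{|A||B|}{q^{n^2-n+1}}\;\gtrsim\;\frac{q^{2n^2-1}}{q^{n^2-n+1}}\;=\;q^{n^2+n-2}\;\geq\;q^{n^2} \]
for $n\geq 2$ (using $|A||B|\gtrsim q^{3n^2-1}/|C|\geq q^{2n^2-1}$), from which $|A(B+C)|\gg q^{n^2}$ follows. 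The main obstacle I anticipate is verifying the algebraic identity displayed above; it is here that normality of $SL_n$ in $GL_n$ enters, absorbing the two determinant correction factors into a single right-translation by $h^{-1}$ and converting the problem into the $SL_n\times SL_n\times M_n$ form required by Theorem \ref{main.theo2}.
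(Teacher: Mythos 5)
Your proof is correct and goes by a genuinely different route from the paper's. The paper establishes Theorem \ref{theo3} directly via the sum-product digraph of Section 4: it sets $U := \{(a^{-1},b): a\in A, b\in B\}$ and $V := A(B+C)\times C$, observes that each vertex of $U$ has at least $|C|$ out-neighbours in $V$, and feeds $e(U,V)\ge |A||B||C|$ into the expander mixing inequality for the $\bigl(q^{2n^2},q^{n^2},cq^{n^2-1/2}\bigr)$-digraph of Theorem \ref{maintheory} to obtain $|A(B+C)| \gg \min\bigl\{|A||B||C|/q^{2n^2-1},\,q^{n^2}\bigr\}$. You instead pigeonhole on the determinant of $A$ and $B$ and use normality of $SL_n$ in $GL_n$ to reduce to the $SL_n\times SL_n\times M_n$ setting of Theorem \ref{main.theo2}; your identity $a(b+c)h^{-1}=\tilde a(\tilde b^*+c'')$ with $\tilde b^*=g_{\lambda_0}\tilde b g_{\lambda_0}^{-1}$ and $c''=g_{\lambda_0}ch^{-1}$ checks out, and both hypotheses of Theorem \ref{main.theo2} survive the pigeonholing because the assumption $|A||B||C|\gg q^{3n^2-1}$ forces each individual set to have size $\gg q^{n^2-1}$. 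The trade-off: the paper's route is self-contained in its spectral machinery and in particular does not need the $n\ge 3$ restriction that Theorem \ref{growthSLn} (and hence Theorem \ref{main.theo2}) carries, whereas your route reuses the $SL_n$-growth machinery of Section 2 and inherits $n\ge 3$, but shows pleasantly that the same exponent falls out of the algebraic toolkit without invoking the digraph at all. One small polish item: you explicitly compute only the third branch of the minimum in Theorem \ref{main.theo2}; it is indeed the tight one here (since $|\tilde B^*||C''|\gg q^{2n^2-2n-1}$ makes the second branch dominate the third), but it is cleaner to state that all three branches exceed $q^{n^2}$ so the minimum is $\gg q^{n^2}$.
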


It is not hard to see that the exponents in Theorem \ref{theo3} and Corollary \ref{main.coro2} are sharp, since one can take $A$ as
the set of zero-determinant matrices in $M_n(\mathbb{F}_q)$ for both cases, and $B = C = M_n(\mathbb{F}_q)$ in case of Theorem \ref{theo3}, we have $|f(A, A, A)| = |A| =o(q^{n^2})$ and $|f(A,B,C)| = |A| =o(q^{n^2}),$ respectively.

\begin{theorem}\label{theo4}
Let $f(x,y,z) = x + yz$ be a function from $M_n(\mathbb{F}_q)^3$ to $M_n(\mathbb{F}_q).$ For $A,B,C \subset M_n(\mathbb{F}_q)$ with $|A||B||C| \gg q^{3n^2-1},$ we have
\[ \left|f(A,B,C)\right| \gg q^{n^2}.\]
\end{theorem}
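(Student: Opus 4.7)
The plan is to carry out a Fourier-analytic second moment argument on the additive group of $M_n(\mathbb{F}_q)$, combined with a rank-sensitive character sum estimate for the product set $BC$. Set $r(m) = |\{(a,b,c) \in A\times B\times C : a + bc = m\}|$, so that $\sum_m r(m) = |A||B||C|$ and, by Cauchy--Schwarz,
\[ |f(A,B,C)| \geq \frac{(|A||B||C|)^2}{\sum_m r(m)^2}. \]
Writing $r = \mathbf{1}_A \ast T$ with $T(u) = |\{(b,c) \in B\times C : bc = u\}|$ and $\ast$ the additive convolution, Plancherel on $M_n(\mathbb{F}_q)$ gives
\[ \sum_m r(m)^2 = \frac{(|A||B||C|)^2}{q^{n^2}} + \frac{1}{q^{n^2}} \sum_{t \neq 0} |\widehat{\mathbf{1}_A}(t)|^2\, |\widehat{T}(t)|^2, \]
where $\widehat{g}(t) = \sum_m g(m)\, \psi(\mathrm{tr}(tm))$ and $\psi$ is a fixed nontrivial additive character of $\mathbb{F}_q$. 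Thus it suffices to show the error sum is $O((|A||B||C|)^2)$, from which $|f(A,B,C)| \gg q^{n^2}$ follows.

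The crux is a uniform bound on $|\widehat{T}(t)|^2$ for $t \neq 0$, and this is where I expect the main difficulty. I would apply Cauchy--Schwarz in the $b$ variable, extend the $b$-sum to all of $M_n(\mathbb{F}_q)$, and invoke orthogonality of additive characters (after using cyclicity of the trace), which collapses the estimate to
\[ |\widehat{T}(t)|^2 \leq |B|\, q^{n^2} \cdot |\{(c_1,c_2) \in C^2 : (c_1 - c_2)t = 0\}|. \]
The key input from the matrix ring structure is that the left annihilator $\{x \in M_n(\mathbb{F}_q) : xt = 0\}$ of a rank-$r$ matrix $t$ has exactly $q^{n(n-r)}$ elements, since each row of $x$ must lie in the $(n-r)$-dimensional left kernel of $t$. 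Hence the inner count is at most $|C|\, q^{n(n-r)}$, so $|\widehat{T}(t)|^2 \leq |B||C|\, q^{2n^2 - nr}$. For any $t \neq 0$ (so $r \geq 1$) this yields the uniform bound $|\widehat{T}(t)|^2 \leq |B||C|\, q^{2n^2 - n}$; the rank-one case is the bottleneck and controls the final exponent.

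Combined with Parseval $\sum_t |\widehat{\mathbf{1}_A}(t)|^2 = q^{n^2}|A|$, the error sum is at most
\[ \sum_{t \neq 0} |\widehat{\mathbf{1}_A}(t)|^2 |\widehat{T}(t)|^2 \leq |B||C|\, q^{2n^2 - n} \cdot q^{n^2} |A| = |A||B||C|\, q^{3n^2 - n}. \]
Since $q^{3n^2 - n} \leq q^{3n^2 - 1}$ for $n \geq 1$, the hypothesis $|A||B||C| \gg q^{3n^2 - 1}$ bounds this by $O((|A||B||C|)^2)$, and the theorem follows. The rank-based annihilator count disposes of the uniform Fourier estimate cleanly, and no delicate rank-by-rank decomposition is needed because the rank-one bound already suffices to meet the stated threshold.
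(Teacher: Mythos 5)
Your argument is correct and proceeds by a genuinely different route than the paper's. The paper proves this theorem (Theorem~\ref{theo4}) as a corollary of its sum--product digraph machinery: it builds the directed graph on $M_n(\mathbb{F}_q)\times M_n(\mathbb{F}_q)$ with an edge $(A,C)\to(B,D)$ when $AB=C+D$, proves the second eigenvalue is $O(q^{n^2-1/2})$ via a careful case analysis of $MM^t$ by the ranks of $A_1-A_2$ and $C_1-C_2$, and then applies the expander mixing lemma to the vertex sets $U=\{(b,-a)\}$ and $V=C\times(A+BC)$, which yields $|A+BC|\gg\min\{q^{n^2},\,|A||B||C|/q^{2n^2-1}\}$. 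You instead run a direct second-moment/energy computation on the additive group $M_n(\mathbb{F}_q)$: Cauchy--Schwarz reduces the problem to bounding $\sum_m r(m)^2$, Plancherel factors $\widehat{r}=\widehat{\mathbf{1}_A}\,\widehat{T}$, and the whole matter comes down to a pointwise estimate on $\widehat{T}(t)$ for $t\neq0$. The key structural input --- that the left annihilator of a rank-$r$ matrix has exactly $q^{n(n-r)}$ elements --- is the same rank-sensitive fact the paper wields in its Lemma~\ref{lemma 1}--driven eigenvalue computation, but you use it directly on the character sum rather than through the $(n,d,\lambda)$-graph abstraction. Both routes are valid; yours is shorter and more self-contained since it bypasses the digraph's normality check, its eigenvalue bookkeeping, and the mixing lemma. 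It is worth noting that your argument actually closes under the weaker hypothesis $|A||B||C|\gg q^{3n^2-n}$, which improves on the stated threshold $q^{3n^2-1}$ for every $n\ge 2$; the paper cannot see this improvement because its eigenvalue bound $q^{n^2-1/2}$ is oblivious to the sizes of $B$ and $C$, whereas your uniform bound $|\widehat{T}(t)|^2\le|B||C|q^{2n^2-n}$ scales with them.
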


\begin{theorem}\label{theo5}
 Let $f(x,y,z,t) = xy+z+t$ be a function from $M_n(\mathbb{F}_q)^4$ to $M_n(\mathbb{F}_q).$ Suppose that $A \subset M_n(\mathbb{F}_q)$ and $|A| \gg q^{n^2-1/4},$ then we have
\[ f(A,A,A,A) = M_n(\mathbb{F}_q).\]
\end{theorem}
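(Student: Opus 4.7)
The plan is to apply Fourier analysis on the additive group $(M_n(\mathbb{F}_q),+)\cong\mathbb{F}_q^{n^2}$ and reduce the problem to estimating a bilinear matrix character sum. Let $\psi$ be a fixed nontrivial additive character of $\mathbb{F}_q$; then every nontrivial additive character of $M_n(\mathbb{F}_q)$ has the form $\chi_W(X)=\psi(\mathrm{tr}(WX))$ for a unique nonzero $W\in M_n(\mathbb{F}_q)$. For $M\in M_n(\mathbb{F}_q)$ let
\[
N(M)=\#\{(a,b,c,d)\in A^4: ab+c+d=M\},
\]
so that the conclusion $f(A,A,A,A)=M_n(\mathbb{F}_q)$ is equivalent to $N(M)>0$ for every $M$.

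Applying orthogonality to the indicator of $ab+c+d-M=0$ splits $N(M)$ into the main term $|A|^4/q^{n^2}$ (from $W=0$) and an error involving $\widehat{1_A}(W)^2\,R(W)\,\overline{\chi_W(M)}$, where $\widehat{1_A}(W)=\sum_{a\in A}\chi_W(a)$ and $R(W)=\sum_{a,b\in A}\chi_W(ab)$. Pulling out $\max_{W\neq 0}|R(W)|$ and invoking Plancherel's identity $\sum_W|\widehat{1_A}(W)|^2=q^{n^2}|A|$ bounds the total error by $|A|\cdot\max_{W\neq 0}|R(W)|$.

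The heart of the argument is to estimate $R(W)$ for a nonzero $W$ of rank $r\in\{1,\dots,n\}$. Grouping by $a$, one has $R(W)=\sum_{a\in A}\widehat{1_A}(Wa)$, where $\widehat{1_A}(Wa)$ denotes the Fourier coefficient at the character $X\mapsto\psi(\mathrm{tr}((Wa)X))$. A Cauchy-Schwarz step, followed by enlarging the outer sum from $A$ to all of $M_n(\mathbb{F}_q)$, together with the observation that $x\mapsto Wx$ is a $q^{n(n-r)}$-to-$1$ linear map onto the subspace of matrices whose columns lie in the column space of $W$, and one final application of Plancherel, yields
\[
|R(W)|^2 \;\le\; |A|\sum_{x\in M_n(\mathbb{F}_q)}|\widehat{1_A}(Wx)|^2 \;\le\; |A|^2\,q^{2n^2-nr}.
\]
The worst case is $r=1$, giving $|R(W)|\le |A|\,q^{n^2-n/2}$.

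Combining these estimates, $N(M)\ge |A|^4/q^{n^2}-|A|^2 q^{n^2-n/2}$, which is strictly positive whenever $|A|\gg q^{n^2-n/4}$. Since $n\ge 2$ implies $n/4\ge 1/2>1/4$, the hypothesis $|A|\gg q^{n^2-1/4}$ is more than sufficient, so $N(M)>0$ for every $M\in M_n(\mathbb{F}_q)$ and the theorem follows. The main obstacle is the rank-$1$ contribution to the bilinear character sum, where the kernel of $x\mapsto Wx$ is largest (of size $q^{n(n-1)}$); the rank-stratified Cauchy-Schwarz above is designed precisely to absorb this worst case, and it does so with substantial slack for $n\ge 2$.
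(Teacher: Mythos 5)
Your proof is correct, but it follows a genuinely different route from the paper's. The paper works entirely within the machinery of the sum--product digraph $G$ on $M_n(\mathbb{F}_q)\times M_n(\mathbb{F}_q)$: it establishes that $G$ is a $(q^{2n^2},q^{n^2},cq^{n^2-1/2})$-digraph (Theorem \ref{maintheory}), then places $U=\{(a_1,-a_3+M)\}$ and $V=\{(a_2,-a_4)\}$ into $G$ and applies the expander mixing lemma to force $e(U,V)>0$. Your argument instead runs directly through additive Fourier analysis on $(M_n(\mathbb{F}_q),+)\cong\mathbb{F}_q^{n^2}$: orthogonality isolates the main term $|A|^4/q^{n^2}$, Plancherel controls $\sum_W|\widehat{1_A}(W)|^2$, and the crux is the rank-stratified Cauchy--Schwarz bound $|R(W)|\le |A|\,q^{\,n^2-nr/2}$ on the bilinear character sum $R(W)=\sum_{a,b\in A}\psi(\operatorname{tr}(Wab))$, exploiting that $x\mapsto Wx$ is $q^{n(n-r)}$-to-one onto an $nr$-dimensional subspace. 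This is a clean and self-contained substitute for building and spectrally analyzing the digraph. Worth highlighting: your method is quantitatively \emph{stronger}. You obtain a positive count as soon as $|A|\gg q^{\,n^2-n/4}$, whereas the spectral route yields the weaker threshold $q^{\,n^2-1/4}$ stated in the theorem; for $n\ge 2$ your threshold is genuinely smaller. The reason is that the digraph's eigenvalue bound $q^{n^2-1/2}$ is a uniform worst-case bound over arbitrary vertex subsets, driven by the rank-$(n-1)$ configurations in $M M^t$, while your Fourier computation sees that for the sets relevant to $xy+z+t$ (products $A\times A$) the dominant contribution comes from rank-one $W$, which is much cheaper. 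Your final remark that ``$n\ge 2$ implies $n/4>1/4$'' correctly confirms that the paper's hypothesis implies yours, so the stated theorem follows (with room to spare).
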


As a direct consequence from Theorem \ref{theo5}, we have $xy + z + t$ is a strong expander over $M_n(\mathbb{F}_q)$ with the exponent $1/4$. 

In \cite{vinh}, Karabulut, Koh, Pham, Shen, and the second listed author obtained the following sum-product estimate over the matrix ring $M_2(\mathbb{F}_q)$.

\begin{theorem} \textsl{(\cite{vinh})}\label{sp2}
For $A\subset M_2(\mathbb{F}_q)$ with $|A|\gg q^{3}$, we have 
\[\max\left\lbrace |A+A|, |AA|\right\rbrace \gg \min \left\lbrace \frac{|A|^2}{q^{7/2}}, ~q^2|A|^{1/2}\right\rbrace.\]
\end{theorem}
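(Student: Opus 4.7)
My plan is to follow the spectral/graph-theoretic sum-product paradigm, adapting the scalar argument of Vinh over $\mathbb{F}_q$ to the matrix ring $M_2(\mathbb{F}_q)$. The first step is an energy reduction: writing $r_\times(z)=\#\{(a,b)\in A^2:ab=z\}$ and $r_+(z)=\#\{(a,b)\in A^2:a+b=z\}$, Cauchy--Schwarz gives the standard lower bounds $|AA|\ge|A|^4/E_\times(A)$ and $|A+A|\ge|A|^4/E_+(A)$. Consequently, an upper bound on the hybrid sum-product count
\[ N(A):=\#\bigl\{(a_1,b_1,c_1,a_2,b_2,c_2)\in A^6 : a_1+b_1 c_1 = a_2+b_2 c_2\bigr\} \]
can be transferred to a bound on $\max\{|A+A|,|AA|\}$: by Cauchy--Schwarz one has $|A|^6\le|A+AA|\cdot N(A)$, and a small value of $|A+AA|$ forces either $|A+A|$ or $|AA|$ to be large through the Pl\"unnecke--Ruzsa inequality in the abelian group $(M_2(\mathbb{F}_q),+)$, after a dyadic pigeonholing that separates the additive and multiplicative contributions.

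To estimate $N(A)$, I would introduce an appropriate Cayley sum-product graph $\mathcal{G}$ on the vertex set $M_2(\mathbb{F}_q)\times M_2(\mathbb{F}_q)$, with adjacency encoding an affine relation of the form $y-y'=a(x-x')$ for $a\in GL_2(\mathbb{F}_q)$; equivalently, this is a Cayley graph of the affine group $M_2(\mathbb{F}_q)\rtimes GL_2(\mathbb{F}_q)$. The graph is regular, and its nontrivial eigenvalues are computed via the additive characters of $(M_2(\mathbb{F}_q),+)$; the resulting character sums collapse to expressions of the form
\[ \sum_{g\in GL_2(\mathbb{F}_q)} \psi\bigl(\operatorname{tr}(M g)\bigr) \]
for various $M\in M_2(\mathbb{F}_q)$. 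Using the Bruhat decomposition of $GL_2(\mathbb{F}_q)$, one reduces these matrix character sums to one-variable Kloosterman sums over $\mathbb{F}_q$, to which Weil's bound applies; the resulting spectral gap should be of order $q^{7/2}$, which is precisely the quantity driving the exponent in the statement.

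With this spectral estimate in hand, the expander mixing lemma applied to $A\times A$ inside $V(\mathcal{G})$ counts sum-product configurations with a main term of order $|A|^4/q^4$ and an error of order $q^{7/2}|A|^2$. Balancing these against the Cauchy--Schwarz lower bound for $N(A)$ produces the advertised dichotomy: the main term dominates when $|A|$ is moderately small, giving the $|A|^2/q^{7/2}$ branch, whereas the spectral error dominates for larger $|A|$, giving the $q^2|A|^{1/2}$ branch, with crossover at $|A|\sim q^{11/3}$.

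The principal obstacle is the eigenvalue estimate $|\lambda_2(\mathcal{G})|\ll q^{7/2}$. Unlike the scalar case, where Weil's theorem produces square-root cancellation directly, here the character sum runs over $GL_2(\mathbb{F}_q)$, and one must carefully unwind the matrix trace $\operatorname{tr}(Mg)$ using a Bruhat or $LU$-decomposition and separately control the degenerate strata coming from rank-deficient $M$. Once the spectral bound is secured, the combinatorial portion of the argument (Cauchy--Schwarz, dyadic pigeonholing, Pl\"unnecke--Ruzsa, and the expander mixing lemma) is essentially mechanical.
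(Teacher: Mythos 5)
Your high-level framing --- apply an expander mixing lemma to a sum--product graph on $M_2(\mathbb{F}_q)\times M_2(\mathbb{F}_q)$ whose second eigenvalue is $O(q^{7/2})$ --- is the right paradigm, and it is indeed what the source [Karabulut--Koh--Pham--Shen--Vinh] and this paper do. But the combinatorial step that is supposed to convert the spectral information into a bound on $\max\{|A+A|,|AA|\}$ has a genuine gap. You bound the collision count $N(A)$, use Cauchy--Schwarz to get $|A|^6\le |A+AA|\cdot N(A)$, and then claim that a bound on $|A+AA|$ can be transferred to $\max\{|A+A|,|AA|\}$ ``through the Pl\"unnecke--Ruzsa inequality.'' No version of Pl\"unnecke--Ruzsa in the additive group $(M_2(\mathbb{F}_q),+)$ controls $|A+AA|$ in terms of $|A+A|$ and $|AA|$: the theorem governs iterated sumsets $kA-\ell A$ and says nothing when $B=AA$ is an arbitrary set with no a priori additive relation to $A$. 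There is no general inequality of the form $|A+B|\lesssim |A+A|\,|B|/|A|$ unless $B$ is already known to sit inside a bounded number of translates of $A$, and establishing something like that is precisely the sum--product difficulty one is trying to avoid. So the chain ``small $N(A)$ $\Rightarrow$ large $|A+AA|$ $\Rightarrow$ large $\max\{|A+A|,|AA|\}$'' does not close.

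The paper sidesteps this entirely by a direct choice of vertex subsets in the sum--product digraph $G$ (vertices $M_2(\mathbb{F}_q)^2$, directed edge $(X,C)\to(B,D)$ iff $XB=C+D$). Take $U=(A+A)\times(AA)$ and $V=\{(a,ab):a,b\in A\}$ (after passing to $A\cap GL_2$, which costs nothing since $|A|\gg q^3$). The identity $a(b+c)=ab+ac$ shows every $(a,ab)\in V$ has $\ge|A|$ out-neighbours in $U$, so $e(V,U)\ge |A|^3$, while $|U|=|A+A|\,|AA|$ and $|V|=|A|^2$. The mixing lemma then gives a lower bound directly on $|A+A|\,|AA|$, hence on the maximum, with no energy $N(A)$, no $|A+AA|$, and no Pl\"unnecke--Ruzsa. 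You would also find, if you worked out the spectral side, that the paper's eigenvalue bound is obtained quite differently from what you propose: rather than Gauss/Kloosterman sums over $GL_2(\mathbb{F}_q)$ via Bruhat, the paper proves the digraph is normal, then decomposes $MM^t$ according to the ranks of $A_1-A_2$ and $C_1-C_2$, bounding each stratum by elementary counting of matrices of a given rank. Also note that the adjacency you describe (``$y-y'=a(x-x')$ for $a\in GL_2$'') produces a graph of degree about $|GL_2|\cdot q^4\approx q^8$, essentially complete, so that particular Cayley graph is not the right object; the relevant degree is $q^4$, as in the $XB=C+D$ digraph.
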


In \cite{thang}, Pham extended this theorem to the setting of $M_n(\mathbb{F}_q)$ for any $n \ge 3$ as follows.

\begin{theorem} \textsl{(\cite{thang})} \label{spt} For $A\subset M_n(\mathbb{F}_q)$ with $n \ge 3$, we have
\begin{itemize}
\item If $|A \cap GL_n(\mathbb{F}_q)| \le |A|/2$ then
\[ \max \{ |A+A|, |AA|\} \gg \min \left\{|A|q, \frac{|A|^3}{q^{2n^2-2n}} \right\}; \]
\item If $|A \cap GL_n(\mathbb{F}_q)| \ge |A|/2$ then
\[  \max \{ |A+A|, |AA|\} \gg \min \left\{ |A|^{2/3}q^{n^2/3}, \frac{|A|^{3/2}}{q^{\frac{n^2}{2}-\frac{1}{4}}} \right\}.\]
\end{itemize}
\end{theorem}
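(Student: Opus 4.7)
Fix an arbitrary $M \in M_n(\mathbb{F}_q)$; it suffices to show that the representation count
\[r(M) \;:=\; |\{(a,b,c,d) \in A^4 : ab + c + d = M\}|\]
is strictly positive. I would work on the additive group $M_n(\mathbb{F}_q) \cong \mathbb{F}_q^{n^2}$, identifying the dual via the bilinear pairing $\chi_X(Y) = \chi(\mathrm{tr}(XY))$ for a fixed non-trivial additive character $\chi$ of $\mathbb{F}_q$. Letting $r_{AA}(s) = |\{(a,b)\in A^2 : ab = s\}|$ one has $r(M) = (r_{AA} * 1_A * 1_A)(M)$, so Fourier inversion yields
\[r(M) \;=\; \frac{|A|^4}{q^{n^2}} \;+\; \frac{1}{q^{n^2}} \sum_{X \neq 0} \widehat{r_{AA}}(X)\,\widehat{1_A}(X)^2\, \overline{\chi_X(M)}.\]
The goal is to show that the error term is strictly smaller than the main term $|A|^4/q^{n^2}$.

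\textbf{Key estimate.} The core analytic input is a rank-stratified bound on $\widehat{r_{AA}}(X)$. Using $\mathrm{tr}(Xab) = \mathrm{tr}((Xa)b)$ one writes $\widehat{r_{AA}}(X) = \sum_{a \in A} \widehat{1_A}(Xa)$. Cauchy--Schwarz, extending the inner sum from $A$ to all of $M_n(\mathbb{F}_q)$, and analyzing the linear map $a \mapsto Xa$ (whose image consists of matrices with columns in the column space of $X$ and whose fibres have size $q^{n(n-r)}$ for $r = \mathrm{rank}(X)$), combined with Parseval for $\widehat{1_A}$, yields
\[|\widehat{r_{AA}}(X)| \;\le\; |A|\, q^{n(2n - r)/2}.\]

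\textbf{Closing the argument.} Plugging this into the error sum, stratifying by $r = \mathrm{rank}(X)$, and bounding each stratum by the global Parseval estimate $\sum_{X \in R_r}|\widehat{1_A}(X)|^2 \le q^{n^2}|A|$ (which beats the trivial slice bound $|R_r|\cdot|A|^2$ in the regime $|A| \gg q^{n^2 - 1/4}$) gives
\[|\mathrm{Err}| \;\ll\; |A|^2 \sum_{r=1}^{n} q^{n(2n-r)/2},\]
whose dominant $r = 1$ term equals $|A|^2 q^{n^2 - n/2}$. Requiring $|A|^2 q^{n^2 - n/2} \ll |A|^4/q^{n^2}$ rearranges to $|A| \gg q^{n^2 - n/4}$, which is weaker than (hence implied by) the hypothesis $|A| \gg q^{n^2 - 1/4}$ for every $n \ge 1$. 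Therefore $r(M) > 0$ for every $M$, as desired.

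\textbf{Main obstacle.} The rank-$1$ stratum is where the argument is tightest: the bound $|\widehat{r_{AA}}(X)| \le |A|q^{n(2n-1)/2}$ is the bottleneck, and one must resist the temptation to use the trivial pointwise estimate $|\widehat{1_A}(X)|^2 \le |A|^2$ on this stratum, since coupled with $|R_1| \asymp q^{2n-1}$ that route loses a factor of $q^{n/2}$. The kernel/image analysis of $a \mapsto Xa$ together with Parseval for $1_A$ on each rank stratum is what makes the bound sharp enough to reach the exponent $1/4$ uniformly in $n$; once this geometric input is in hand, the remaining steps are just bookkeeping on a geometric series in $r$.
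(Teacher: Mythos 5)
There is a genuine gap, and it is structural rather than computational: you have written a proof sketch of the wrong statement. The theorem you were asked about is the sum-product dichotomy of Theorem \ref{spt}, which bounds $\max\{|A+A|,|AA|\}$ from below for \emph{every} $A\subset M_n(\mathbb{F}_q)$, with two regimes according to whether $|A\cap GL_n(\mathbb{F}_q)|$ is at most or at least $|A|/2$. Your proposal instead fixes $M$ and shows that $ab+c+d=M$ is solvable in $A^4$ when $|A|\gg q^{n^2-1/4}$; that is (a Fourier-analytic version of) Theorem \ref{theo5}, the strong-expander statement for $xy+z+t$. Nothing in your argument produces a lower bound on $|A+A|$ or $|AA|$, the singular/invertible case split of Theorem \ref{spt} is never engaged, and the hypothesis $|A|\gg q^{n^2-1/4}$ that your argument needs is not available in Theorem \ref{spt}, whose $\min$-type bounds are meant to hold for sets of all sizes. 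In particular the first bullet (the mostly-singular case, with the terms $|A|q$ and $|A|^3/q^{2n^2-2n}$) hinges on the structure of singular matrices and cannot be reached by counting solutions of $ab+c+d=M$ at all.

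For what it is worth, your rank-stratified estimate $|\widehat{r_{AA}}(X)|\le |A|\,q^{n(2n-r)/2}$ and the ensuing bookkeeping look essentially sound, so the write-up is a reasonable sketch of Theorem \ref{theo5} (indeed with the stronger exponent $n/4$). But note that the paper does not prove Theorem \ref{spt} itself: it is quoted from \cite{thang}. The closest argument actually carried out in the paper is the proof of Theorem \ref{theo6}, where the counting machinery (the sum-product digraph of Theorem \ref{maintheory} and the mixing lemma) is converted into a sum-product bound by taking $U=(A+A)\times(AA)$ and $V=\{(a,ab):a,b\in A\}$, so that the edge count forces $|A|^3\ll |AA||A+A||A|^2q^{-n^2}+q^{n^2-1/2}|A|\sqrt{|AA||A+A|}$. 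Some bridge of that kind, relating your character-sum count to the sizes of $A+A$ and $AA$, together with a separate treatment of the singular-heavy case, is exactly what is missing from your proposal.
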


Our second result is the sum-product estimate over the matrix ring $M_n(\mathbb{F}_q)$. 

\begin{theorem}\label{theo6}
For $A \subset M_n(\mathbb{F}_q)$ with $|A|\gg q^{n^2-1},$ we have 
\[ \max\left\{|A + A|,|AA|\right\} \gg \min\left\{\frac{|A|^2}{q^{n^2-1/2}},q^{n^2/2}|A|^{1/2}\right\}.\]
\end{theorem}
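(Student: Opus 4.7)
The plan is to adapt Garaev's classical sum-product estimate to the matrix ring $M_n(\mathbb{F}_q)$, following and extending to general $n$ the approach of Theorem~\ref{sp2} from \cite{vinh}. Set $L := \max\{|A+A|,\,|AA|\}$, and consider the counting quantity
\[
T \;:=\; \bigl|\{(a_1,b_1,c_1,a_2,b_2,c_2)\in A^6 \;:\; a_1+b_1c_1 = a_2+b_2c_2\}\bigr|.
\]

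The first step is to produce a lower bound on $T$ via Cauchy--Schwarz applied to the representation function $r(x) := |\{(a,b,c)\in A^3 : a+bc = x\}|$. Since $\sum_x r(x) = |A|^3$ and $\mathrm{supp}(r) \subseteq A+AA$, the inequality $|A|^6 \le |A+AA|\cdot T$, together with the trivial bound $|A+AA| \le |A|\cdot|AA| \le L|A|$, yields $T \ge |A|^5/L$.

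The second and more substantial step is an upper bound on $T$ obtained through Fourier analysis on the abelian group $(M_n(\mathbb{F}_q),+)$. Parseval's identity gives
\[
T \;=\; \frac{|A|^6}{q^{n^2}} + \frac{1}{q^{n^2}}\sum_{\chi \ne 0}|\widehat{r}(\chi)|^2,\qquad \widehat{r}(\chi) \;=\; \widehat{1_A}(\chi)\cdot C_\chi,\quad C_\chi := \sum_{(b,c)\in A^2}\chi(bc).
\]
Bounding the product character sum $C_\chi$ via the character theory of $GL_n(\mathbb{F}_q)$---stratified by the rank of the matrix parametrising $\chi$, in the same spirit as the spectral estimates behind Theorems~\ref{main.theo1}--\ref{theo5}---should deliver two distinct error pieces,
\[
T \;\le\; \frac{|A|^6}{q^{n^2}} + C_1\, q^{n^2-1/2}\,|A|^3 + C_2\,\frac{|A|^{9/2}}{q^{n^2/2}}.
\]
Substituting into $T \ge |A|^5/L$, and noting that in the range $|A| \gg q^{n^2-1}$ the main term $|A|^6/q^{n^2}$ is dominated by both error pieces, one arrives at
\[
L \;\gg\; \min\!\left\{\frac{|A|^2}{q^{n^2-1/2}},\; q^{n^2/2}\,|A|^{1/2}\right\},
\]
the two terms of the minimum coming from the two error pieces respectively.

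The principal obstacle will be the character-sum estimate on $C_\chi$ with the correct rank-stratified error terms. For $\chi$ corresponding to an invertible matrix, the bound $|C_\chi|\le q^{n^2/2}|A|$ follows quickly from Cauchy--Schwarz and Parseval; for $\chi$ corresponding to singular matrices, the estimate is considerably more delicate, and a case split on $|A\cap GL_n(\mathbb{F}_q)|$ (as in Theorem~\ref{spt}) may be needed to cleanly separate the invertible and singular contributions to $T$. For $n=2$ all of this is carried out in \cite{vinh}; the extension to general $n\ge 3$ requires the analogous, and more intricate, character-theoretic input on $GL_n(\mathbb{F}_q)$.
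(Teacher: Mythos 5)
Your proposal takes a genuinely different route from the paper---a Garaev-style energy/Fourier argument rather than the spectral expander-mixing argument the paper actually uses via the sum-product digraph of Theorem~\ref{maintheory}---but the route has a fatal flaw at the very first step, before the hard character-sum estimates even come into play.

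The problem is the lower bound $T \ge |A|^5/L$. By Parseval (or Cauchy--Schwarz applied directly to $r$), one always has $T \ge |A|^6/q^{n^2}$: this is precisely the $\chi = 0$ term in your own identity. Consequently the chain $L \ge |A|^5/T$ can never yield anything stronger than $L \gg q^{n^2}/|A|$, which in the regime $|A| \gg q^{n^2-1}$ is at most $\ll q$ and is therefore vastly weaker than the target $\min\{|A|^2/q^{n^2-1/2},\,q^{n^2/2}|A|^{1/2}\} \gg q^{n^2-3/2}$. Concretely, the claim that in the range $|A| \gg q^{n^2-1}$ the main term $|A|^6/q^{n^2}$ is ``dominated by both error pieces'' is the reverse of the truth: at $|A| \sim q^{n^2-1}$ the main term is $\sim q^{5n^2-6}$, while your two error terms are $\sim q^{4n^2-7/2}$ and $\sim q^{4n^2-9/2}$, both smaller for every $n \ge 2$. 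So even granting your conjectural upper bound on $T$ verbatim, the inequality you are solving collapses to $L \gg q^{n^2}/|A|$ and the theorem does not follow. The wasteful step is $|A+AA| \le |A|\,|AA| \le L|A|$: for dense $A$ this loses a factor of roughly $q^{n^2}$, and an energy argument built on this cannot recover it. Separately, the upper bound on $T$---and in particular the rank-stratified estimate for $C_\chi$ against singular-matrix characters---is left as a statement of intent rather than a proof, so this part of the argument is also not yet established.

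For contrast, the paper's proof works directly with the digraph $G$ on $M_n(\mathbb{F}_q)\times M_n(\mathbb{F}_q)$ with an edge from $(A,C)$ to $(B,D)$ when $AB = C+D$; it takes $U = (A+A)\times(AA)$ and $V = \{(a,ab) : a,b\in A\}$, notes that every vertex of $V$ has at least $|A|$ out-neighbours in $U$, and applies the expander mixing lemma with the spectral gap $\lambda \ll q^{n^2-1/2}$ from Theorem~\ref{maintheory} to obtain a quadratic inequality in $\sqrt{|A+A||AA|}$. That argument has no ``main term dominates'' obstruction because it bounds an edge count between two \emph{asymmetric} sets whose sizes already encode $|A+A|$ and $|AA|$, rather than passing through $|A+AA|$. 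If you wish to salvage an energy/Fourier argument you would need a representation function $r$ whose support is directly tied to $A+A$ or $AA$ (not $A+AA$) and whose $L^2$ lower bound does not degenerate for dense $A$; as written, the proposal cannot be repaired just by tightening the character-sum estimates.
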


Note that, this result is a generalization of Theorem \ref{sp2} and improves the second part of Theorem \ref{spt}.

\section{Proofs of Theorems \ref{main.theo1} and \ref{main.theo2} }

The two following theorems play important roles in the proofs of Theorem \ref{main.theo1} and \ref{main.theo2}. The first theorem was given by Babai, Nikolay and László(\cite{bnl}), and the second theorem was given by T. Pham (\cite{thang}).
 
\begin{theorem}\label{growthSLn}\textsl{(Babai, Nikolay, László, \cite{bnl})}
For $A,B \subset SL_n(\mathbb{F}_q)$ with $n \ge 3,$ we have 
\[ |AB| \gg \min\left\{ q^{n^2-1}, \dfrac{|A||B|}{q^{n^2-n}} \right\}. \] 
\end{theorem}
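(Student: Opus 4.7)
The plan is to prove the bound by the standard Cauchy--Schwarz / non-abelian Fourier method, with the Landazuri--Seitz estimate on minimal representation degrees as the arithmetic input. Write $G=SL_n(\mathbb{F}_q)$, $f_A = \mathbf{1}_A$, $f_B = \mathbf{1}_B$, and let
\[ E(A,B) := \#\{(a_1,a_2,b_1,b_2)\in A^2\times B^2 : a_1b_1 = a_2b_2\} = \|f_A * f_B\|_2^2.\]
By Cauchy--Schwarz applied to the representation function $r(c)=\#\{(a,b)\in A\times B : ab=c\}$, we have $|AB|\ge (|A||B|)^2/E(A,B)$. So the whole game is to upper-bound $E(A,B)$.

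Next I would expand $\|f_A*f_B\|_2^2$ by Plancherel on $G$: with $\hat f(\rho)=\sum_g f(g)\rho(g)$ and $\widehat{f_A*f_B}(\rho)=\hat f_A(\rho)\hat f_B(\rho)$, one gets
\[ E(A,B) \;=\; \frac{1}{|G|}\sum_{\rho\in\widehat G} d_\rho\,\bigl\|\hat f_A(\rho)\hat f_B(\rho)\bigr\|_{HS}^2.\]
The trivial representation contributes $|A|^2|B|^2/|G|$. For nontrivial $\rho$, use the submultiplicativity $\|MN\|_{HS}\le \|M\|_{op}\|N\|_{HS}$ together with the Plancherel bound $\|\hat f_A(\rho)\|_{op}^2\le \|\hat f_A(\rho)\|_{HS}^2 \le |G||A|/d_\rho$ to obtain
\[ \sum_{\rho\ne 1} d_\rho\,\|\hat f_A(\rho)\hat f_B(\rho)\|_{HS}^2 \;\le\; \max_{\rho\ne 1}\|\hat f_A(\rho)\|_{op}^2 \cdot \sum_{\rho} d_\rho\|\hat f_B(\rho)\|_{HS}^2 \;\le\; \frac{|G|^2|A||B|}{D},\]
where $D=\min_{\rho\ne 1}d_\rho$ is the minimal dimension of a nontrivial irreducible representation of $G$.

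The key external input is the Landazuri--Seitz bound, which for $n\ge 3$ gives $D\gg q^{n-1}$ for $SL_n(\mathbb{F}_q)$. Combined with $|G|\sim q^{n^2-1}$, this yields
\[ E(A,B) \;\ll\; \frac{|A|^2|B|^2}{q^{n^2-1}} \;+\; q^{n^2-n}\,|A||B|,\]
and then $|AB|\ge (|A||B|)^2/E(A,B) \gg \min\{q^{n^2-1},\,|A||B|/q^{n^2-n}\}$, which is the desired inequality.

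The main obstacle, and the only nonroutine ingredient, is the Landazuri--Seitz lower bound $D\gg q^{n-1}$; everything else is bookkeeping with Plancherel. The restriction $n\ge 3$ enters precisely through this bound, since the minimal nontrivial degree of $SL_2(\mathbb{F}_q)$ is only $(q-1)/2$ and would not suffice to close the inequality in the stated form. Aside from that, one must be attentive to the normalization of the Fourier transform and to applying the operator/Hilbert--Schmidt inequality in the correct order (bounding the "$A$-factor" in operator norm so that summing the HS norms of the $B$-factor can be done via Plancherel without loss).
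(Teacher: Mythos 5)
The paper states this as a cited result of Babai, Nikolov, and Pyber \cite{bnl} and supplies no proof of its own. Your argument correctly reconstructs the standard proof used in that reference: bounding the multiplicative energy via non-abelian Plancherel and the operator-norm/Hilbert--Schmidt split (Gowers' quasi-random group trick), with the Landazuri--Seitz lower bound $D\gg q^{n-1}$ for the minimal nontrivial degree of $SL_n(\mathbb{F}_q)$ as the arithmetic input; together with $|G|\sim q^{n^2-1}$ this gives exactly $\min\{q^{n^2-1},|A||B|/q^{n^2-n}\}$, and the hypothesis $n\ge 3$ is used precisely where you say it is.
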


\begin{theorem}\textsl{(T. Pham, \cite{thang})}\label{sum-growth}
For $A \subset SL_n(\mathbb{F}_q)$ and $B \subset M_n(\mathbb{F}_q)$ with $n \ge 3,$ we have
\[ |A+B| \gg \min\left\{|A|q, \frac{|A|^2|B|}{q^{2n^2-2n-2}}\right\}. \]
\end{theorem}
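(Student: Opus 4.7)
The plan is to apply the Cauchy--Schwarz inequality to convert a lower bound on $|A+B|$ into an upper bound on the additive energy $E(A,B)$, and then to estimate that energy by Fourier analysis on $(M_n(\mathbb{F}_q),+)\cong\mathbb{F}_q^{n^{2}}$, crucially exploiting $A\subset SL_n(\mathbb{F}_q)$.

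Concretely, Cauchy--Schwarz applied to the representation function $r(s):=|\{(a,b)\in A\times B:a+b=s\}|$ yields
\[ |A+B|\;\geq\;\frac{|A|^{2}|B|^{2}}{E(A,B)},\qquad E(A,B):=\bigl|\{(a,a',b,b')\in A^{2}\times B^{2}:a+b=a'+b'\}\bigr|. \]
I would parametrize the additive characters of $M_n(\mathbb{F}_q)$ by $\chi_T(X)=\psi(\mathrm{tr}(TX))$, for $T\in M_n(\mathbb{F}_q)$ and a fixed non-trivial additive character $\psi\colon\mathbb{F}_q\to\mathbb{C}^{\times}$, and invoke Plancherel to write
\[ E(A,B)\;=\;\frac{1}{q^{n^{2}}}\sum_{T\in M_n(\mathbb{F}_q)}\bigl|\widehat{1_{A}}(T)\bigr|^{2}\bigl|\widehat{1_{B}}(T)\bigr|^{2},\qquad \widehat{1_{A}}(T):=\sum_{a\in A}\chi_T(a). \]
The $T=0$ term contributes the main piece $|A|^{2}|B|^{2}/q^{n^{2}}$; when this dominates $E(A,B)$, Cauchy--Schwarz already delivers $|A+B|\gg q^{n^{2}}$, which is stronger than $|A|q$ since $|A|\leq|SL_n(\mathbb{F}_q)|\sim q^{n^{2}-1}$.

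For the off-diagonal $T\neq 0$ part, I would pull out the supremum and invoke Parseval for $B$ (so that $\sum_T|\widehat{1_{B}}(T)|^{2}=q^{n^{2}}|B|$) to get
\[ \frac{1}{q^{n^{2}}}\sum_{T\neq 0}\bigl|\widehat{1_{A}}(T)\bigr|^{2}\bigl|\widehat{1_{B}}(T)\bigr|^{2}\;\leq\;|B|\cdot\max_{T\neq 0}\bigl|\widehat{1_{A}}(T)\bigr|^{2}. \]
The entire argument then hinges on proving that $\max_{T\neq 0}|\widehat{1_{A}}(T)|^{2}\ll q^{2n^{2}-2n-2}$ for any $A\subset SL_n(\mathbb{F}_q)$. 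Granted this, we would obtain $E(A,B)\ll|A|^{2}|B|^{2}/q^{n^{2}}+|B|\,q^{2n^{2}-2n-2}$, and substituting back into Cauchy--Schwarz (together with the trivial $|A|q\leq q^{n^{2}}$) would give $|A+B|\gg\min\{|A|q,\,|A|^{2}|B|/q^{2n^{2}-2n-2}\}$, as required.

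The main obstacle is securing that character-sum estimate. The natural route is a two-step transfer: first, a uniform Weil-type bound $\max_{T\neq 0}|\widehat{1_{SL_n(\mathbb{F}_q)}}(T)|\ll q^{n^{2}-n-1}$ for the matrix Kloosterman sum $\sum_{g\in SL_n(\mathbb{F}_q)}\psi(\mathrm{tr}(Tg))$. This sum is invariant under the action $(U,V)\cdot T=UTV$ with $\det(UV)=1$, so it depends only on the Smith normal form of $T$, reducing matters to a finite list of strata indexed by $\mathrm{rank}(T)$; the hardest stratum, $T\in GL_n(\mathbb{F}_q)$, is attacked by the change of variables $g=T^{-1}h$ to reduce the sum to $\sum_{h:\det h=\det T}\psi(\mathrm{tr}(h))$ on a fixed determinant level set, which can then be bounded by a Deligne-type purity argument or by an elementary induction on $n$. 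Second, transfer this bound from $SL_n(\mathbb{F}_q)$ to an arbitrary subset $A$ via a Cauchy--Schwarz/weighting manipulation applied to $\widehat{1_A}(T)=\sum_{g\in SL_n}1_A(g)\chi_T(g)$, plus the trivial fallback $|\widehat{1_A}(T)|\leq|A|$ in the small-$A$ regime. Once both ingredients are in place, the remainder of the proof is routine Fourier--Plancherel bookkeeping.
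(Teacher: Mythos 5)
The paper does not prove this statement: it is quoted verbatim as Theorem~\ref{sum-growth} from T.~Pham's paper \cite{thang} and used as a black box, so there is no ``paper's own proof'' to match against; I will assess your argument on its own terms.

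Your overall Plancherel/Cauchy--Schwarz framework is the right kind of tool, and the matrix Kloosterman estimate you want, namely $\max_{T\neq 0}\bigl|\sum_{g\in SL_n(\mathbb{F}_q)}\psi(\mathrm{tr}(Tg))\bigr|\ll q^{n^{2}-n-1}$, is indeed correct (and sharp for rank-one $T$). But the pivotal intermediate claim you reduce to --- that $\max_{T\neq 0}|\widehat{1_A}(T)|^{2}\ll q^{2n^{2}-2n-2}$ for \emph{every} subset $A\subset SL_n(\mathbb{F}_q)$ --- is simply false, and the ``transfer from $SL_n$ to arbitrary $A$'' step cannot be repaired. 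Take $T_0$ to be the elementary matrix $E_{11}$; the affine slice $\{X\in SL_n(\mathbb{F}_q): X_{11}=0\}$ has size about $q^{n^{2}-2}$, which for $n\ge 3$ exceeds $q^{n^{2}-n-1}$. Choosing any $A$ inside this slice with $q^{n^{2}-n-1}<|A|\ll q^{n^{2}-2}$ gives $|\widehat{1_A}(E_{11})|=|A|>q^{n^{2}-n-1}$, violating your bound. Thus no ``Cauchy--Schwarz/weighting manipulation'' can produce a non-trivial uniform Fourier decay for arbitrary $A\subset SL_n(\mathbb{F}_q)$; the trivial bound $|\widehat{1_A}(T)|\le|A|$ is generically optimal, and in the regime where it does not suffice your argument has no substitute.

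The correct way to exploit $A\subset SL_n(\mathbb{F}_q)$ is to relax one copy of $A$ to all of $SL_n(\mathbb{F}_q)$ \emph{inside the quadruple count}, not inside the exponential sum. Precisely, with $E(A,B)$ as you define it, use $E(A,B)\le N$ where
\[ N:=\bigl|\{(a,a',b,b')\in SL_n(\mathbb{F}_q)\times A\times B\times B:\ a+b=a'+b'\}\bigr|
= \sum_{a'\in A,\,b,b'\in B}1_{SL_n}(a'+b'-b). \]
Fourier-expanding $1_{SL_n}$ produces
\[ N=\frac{|SL_n(\mathbb{F}_q)||A||B|^{2}}{q^{n^{2}}}+\frac{1}{q^{n^{2}}}\sum_{T\neq 0}\widehat{1_{SL_n}}(T)\,\widehat{1_A}(-T)\,\widehat{1_B}(-T)\,\widehat{1_B}(T), \]
and the error term is bounded by pulling out $\max_{T\neq 0}|\widehat{1_{SL_n}}(T)|$ and applying Cauchy--Schwarz in $T$ to get $\le \max_{T\ne 0}|\widehat{1_{SL_n}}(T)|\cdot\bigl(E(A,B)|B|\bigr)^{1/2}$. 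Together with $E(A,B)\le N$ this is a quadratic inequality in $E(A,B)^{1/2}$; since $|SL_n(\mathbb{F}_q)|\sim q^{n^{2}-1}$ and the Kloosterman bound gives $\max_{T\neq0}|\widehat{1_{SL_n}}(T)|\ll q^{n^{2}-n-1}$, one obtains $E(A,B)\ll |A||B|^{2}/q + q^{2n^{2}-2n-2}|B|$, and your final Cauchy--Schwarz step then delivers exactly $|A+B|\gg\min\{|A|q,\ |A|^{2}|B|/q^{2n^{2}-2n-2}\}$. So the shape of your argument is salvageable, but the object whose Fourier coefficients you need to control is $1_{SL_n(\mathbb{F}_q)}$, not $1_A$.
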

We now ready to prove Theorem \ref{main.theo1} and Theorem \ref{main.theo2}.

\begin{proof}[\bf Proof of Theorem \ref{main.theo1}]
For $B,C \subset SL_n(\mathbb{F}_q),$ it follows from Theorem \ref{growthSLn}, we have 
\begin{equation}\label{pro.1}
 |BC| \gg \min\left\{ q^{n^2-1}, \frac{|B||C|}{q^{n^2-n}} \right\}.
\end{equation}
Since $B,C$ are subsets in $SL_n(\mathbb{F}_q)$, $BC$ is still a subset in $SL_n(\mathbb{F}_q).$ It follows from Theorem \ref{sum-growth} and \eqref{pro.1}, we obtain
\begin{align*}
|f(A,B,C)|=|A+BC| &\gg \min\left\{ |BC|q, \frac{|A||BC|^2}{q^{2n^2-2n-2}}\right\}  \\
&\gg \min\left\{ q^{n^2}, \dfrac{|B||C|}{q^{n^2-n-1}}, q^{2n}|A|, \dfrac{|A||B|^2|C|^2}{q^{4n^2-4n-2}}\right\}.
\end{align*}
This completes the proof of Theorem \ref{main.theo1}.
\end{proof}

To prove Theorem \ref{main.theo2}, we make use of the following lemma.
\begin{lemma}\label{main-lemma}
Let $\alpha, \beta$ be non-zero elements in $\mathbb{F}_q,$ and $D_\alpha,D_\beta$ be two sets of $n \times n$ matrices of determinants $\alpha$ and $\beta$, respectively. Define
\begin{align*}
D^r_\alpha := \left\{ l_\alpha \cdot x : x \in D_\alpha \right\} \subset SL_n(\mathbb{F}_q)
\end{align*}
and  
\begin{align*}
D^c_\beta := \left\{ y \cdot l_\beta  : y \in D_\beta \right\} \subset SL_n(\mathbb{F}_q)
\end{align*}
where 
\begin{align*}
l_\alpha = \begin{pmatrix}
        \alpha^{-1} & 0  & \dots & 0 \\
         0 & 1 & \dots & 0 \\
        . & . & \dots & .  \\
        0 & 0 & \dots & 1
    \end{pmatrix}, \quad 
l_\beta = \begin{pmatrix}
        \beta^{-1} & 0  & \dots & 0 \\
         0 & 1 & \dots & 0 \\
        . & . & \dots & .  \\
        0 & 0 & \dots & 1
    \end{pmatrix}. 
\end{align*}
Then we have
\[ |D_\alpha D_\beta| = |D^r_\alpha D^c_\beta|.\]
\end{lemma}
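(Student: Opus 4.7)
The plan is to reduce everything to the straightforward observation that left and right multiplication by invertible matrices are bijections on $M_n(\mathbb{F}_q)$, hence preserve cardinalities of subsets.

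First, I would rewrite the product set $D^r_\alpha D^c_\beta$ by unpacking the definitions. Since every element of $D^r_\alpha$ has the form $l_\alpha x$ with $x \in D_\alpha$, and every element of $D^c_\beta$ has the form $y\, l_\beta$ with $y \in D_\beta$, a typical product is
\[
(l_\alpha x)(y\, l_\beta) \;=\; l_\alpha (xy)\, l_\beta.
\]
Letting every pair $(x,y)$ range over $D_\alpha \times D_\beta$ shows
\[
D^r_\alpha D^c_\beta \;=\; l_\alpha \cdot (D_\alpha D_\beta) \cdot l_\beta,
\]
where on the right we mean the set $\{l_\alpha s\, l_\beta : s \in D_\alpha D_\beta\}$.

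Second, I would note that $l_\alpha$ and $l_\beta$ are invertible elements of $M_n(\mathbb{F}_q)$, since their determinants are the nonzero scalars $\alpha^{-1}$ and $\beta^{-1}$ respectively. Consequently the map $\varphi \colon M_n(\mathbb{F}_q) \to M_n(\mathbb{F}_q)$ defined by $\varphi(s) = l_\alpha s\, l_\beta$ is a bijection, with inverse $s \mapsto l_\alpha^{-1} s\, l_\beta^{-1}$. Applying $\varphi$ to the set $D_\alpha D_\beta$ therefore does not change its cardinality, yielding
\[
|D^r_\alpha D^c_\beta| \;=\; |l_\alpha \cdot (D_\alpha D_\beta) \cdot l_\beta| \;=\; |D_\alpha D_\beta|,
\]
which is the claim. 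There is no real obstacle here; the only content is the algebraic identity $(l_\alpha x)(y l_\beta) = l_\alpha(xy) l_\beta$ together with the invertibility of $l_\alpha,l_\beta$. The lemma is included purely as a convenience so that later arguments can pass from determinant cosets $D_\alpha, D_\beta$ into $SL_n(\mathbb{F}_q)$ and apply Theorem \ref{growthSLn} without losing cardinalities.
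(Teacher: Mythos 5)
Your proof is correct and uses essentially the same idea as the paper: left and right multiplication by the invertible matrices $l_\alpha, l_\beta$ are bijections on $M_n(\mathbb{F}_q)$, so $s \mapsto l_\alpha s\, l_\beta$ carries $D_\alpha D_\beta$ bijectively onto $D^r_\alpha D^c_\beta$. The paper simply splits this into two one-sided steps ($|D_\alpha D_\beta| = |D^r_\alpha D_\beta| = |D^r_\alpha D^c_\beta|$), whereas you do both sides at once; the substance is identical.
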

\begin{proof}
We first prove that $|D_\alpha D_\beta| = |D^r_\alpha D_\beta|.$ Indeed, let $x, y$ be two matrices in $D_\alpha$ and $D_\beta,$ respectively. Let $z = x \cdot y$ and $x'$ be the corresponding of $x$ in $D^r_\alpha,$ we have 
\[ x' = l_\alpha \cdot x.\]
Observe that $x'\cdot y = (l_\alpha \cdot x) \cdot y = l_\alpha \cdot (x \cdot y) = l_\alpha \cdot z.$ Since $\alpha \neq 0,$ the map $f: z \to l_\alpha\cdot z$ is a one-to-one correspondence between $D_\alpha D_\beta$ and $D^r_\alpha D_\beta.$  

Using the same argument, we can also indicate that there is a correspondence between $D^r_\alpha D_\beta$ and $D^r_\alpha D^c_\beta.$ In other words, we have 
\[ |D_\alpha D_\beta| = |D^r_\alpha D_\beta| = |D^r_\alpha D^c_\beta|.\]
\end{proof}

\begin{proof}[\bf Proof of Theorem \ref{main.theo2}]
We partition the set $B+C$ into $q$ subsets $D_\alpha, \alpha \in \mathbb{F}_q,$ of determinant $\alpha.$
Since $B \subset SL_n(\mathbb{F}_q)$ and $C \subset M_n(\mathbb{F}_q),$ it follows from Theorem \ref{sum-growth}, we have 
\[ |B + C| \gg \min\left\{ |B|q, \frac{|B|^2|C|}{q^{2n^2-2n-2}}\right\} \gg q^{n^2-1} \sim |D_0|,\]
where $|B| \gg q^{n^2-2}$ and $|C| \gg q^{(n-1)^2}.$ Thus, without loss of generality, we assume that
\[ |B+C| \sim \sum_{\alpha \neq 0} |D_\alpha|.\]
Since the matrices in $AD_\alpha$ are of determinant $\alpha,$ $AD_\alpha \cap AD_\beta = \emptyset$ for all $\alpha \neq \beta.$ It follows that 
\[ |A(B+C)| \gg \sum_{\alpha \neq 0} |AD_\alpha|.\]
On the other hand, for each $\alpha \neq 0,$ let 
\begin{align*}
D^c_\alpha := \left\{ x \cdot l_\alpha : x \in D_\alpha \right\} \subset SL_n(\mathbb{F}_q),
\end{align*}
where $l_\alpha$ was defined in Lemma \ref{main-lemma}.
It is clear that $|D^c_\alpha| = |D_\alpha|.$ Moreover, Lemma \ref{main-lemma} gives us $|AD_\alpha| = |AD^c_\alpha|.$ It follows from Theorem \ref{growthSLn}, we have 
\[|AD_\alpha| = |AD^c_\alpha| \gg \min\left\{ q^{n^2-1}, \frac{|A||D^c_\alpha|}{q^{n^2-n}}\right\} = \min\left\{ q^{n^2-1}, \frac{|A||D_\alpha|}{q^{n^2-n}}\right\}. \]
Hence,
\begin{align*}
 |f(A,B,C)|=|A(B+C)| &\gg \sum_{\alpha \neq 0} |AD_\alpha| \\
 &\gg \min\left\{ q^{n^2}, \frac{|A||B+C|}{q^{n^2-n}} \right\} \\
 &\gg \min\left\{ q^{n^2}, \frac{|A||B|^2|C|}{q^{3n^2-3n-2}}, \frac{|A||B|}{q^{n^2-n-1}}\right\}.
\end{align*}
This completes the proof of Theorem \ref{main.theo2}.

\end{proof}

\section{Proofs of Corollaries \ref{main.coro1} and \ref{main.coro2}}

\begin{proof}[\bf Proof of Corollary \ref{main.coro1}]
Firstly, we prove that for $X \subset D_\alpha$ and $Y \subset M_n(\mathbb{F}_q)$ where $D_\alpha$ is the set of matrices of determinant $\alpha \neq 0,$ then 
\begin{equation}\label{det.a}
 |X + Y| \gg \min\left\{ |X|q, \frac{|X|^2|Y|}{q^{2n^2-2n-2}} \right\}.
\end{equation} 
Indeed, let $X^* = \left\{ l_\alpha \cdot x: x \in X \right\}$ and $ Y^* = \left\{ l_\alpha \cdot y: y \in Y \right\}$ with $l_\alpha$ defined in Lemma \ref{main-lemma}, it is easy to check that 
\[ |X + Y| = |X^* + Y^*|.\] 
Applying Theorem \ref{sum-growth}, and using the fact that $|X^*|=|X|$ and $|Y^*| = |Y|,$ we get \eqref{det.a}. 

Since $|A| \gg q^{n^2-1},$ without loss of generality, we can assume that $A \subset GL_n(\mathbb{F}_q).$ Thus, there exist $\alpha \neq 0$ and subset $A' \subset A$ such that all  matrices in $A'$ are of determinant $\alpha$ and $|A'| \gg q^{n^2-2}.$
  
Note that $A'A'$ is the set of matrices of determinant $\alpha^2 \neq 0$, by applying \eqref{det.a} with $X=A'A', Y = A',$  we have 
\[ |A'+A'A'| \gg \min\left\{|A'A'|q, \frac{|A'A'|^2|A'|}{q^{2n^2-2n-2}} \right\}.\]

Let $A^r,A^c$ be the sets of corresponding matrices of determinant $1$ of matrices in $A'$ in the form of Lemma \ref{main-lemma}. It follows from Lemma \ref{main-lemma} and Theorem \ref{growthSLn} that
\[ |A'A'| = |A^rA^c| \gg \min\left\{q^{n^2-1}, \frac{|A^r||A^c|}{q^{n^2-n}}\right\} = \min\left\{ q^{n^2-1}, \frac{|A'|^2}{q^{n^2-n}}\right\}. \]
Therefore,
\[ |A'+A'A'| \gg \min\left\{ q^{n^2},q^{2n}|A'|, \frac{|A'|^2}{q^{n^2-n-1}}, \frac{|A'|^5}{q^{4n^2-4n-2}}\right\} \gg q^{n^2}\]
whenever $|A'| \gg q^{n^2-2},$ which concludes the proof of Corollary \ref{main.coro1}. 
\end{proof}

\begin{proof}[\bf Proof of Corollary \ref{main.coro2}]The proof of Corollary \ref{main.coro2} is almost the same with that of
Corollary \ref{main.coro1}, except that we follow the proof of Theorem \ref{main.theo2} for the set $A'.$

\end{proof}

\section{Sum-product digraph over matrix rings}
In this section, we mimic the study of sum-product digraph over $M_2(\mathbb{F}_q)$ in \cite{vinh} to extend results over $M_n(\mathbb{F}_q).$ 

Let $G$ be a directed graph (digraph) on $n$ vertices where the in-degree and out-degree of each vertex are both $d.$\\[12pt]
Let $A_G$ be the adjacency matrix of $G$, i.e., $a_{ij} = 1$ if there is a directed edge from $i$ to $j$ and zero otherwise. Suppose that $\lambda_1 = d, \lambda_2,...,\lambda_n$ are the eigenvalues of $A_G.$ These eigenvalues can be complex, so we cannot order them, but it is known that $|\lambda_i| \le d$ for all $1 \le i \le n.$ Define $\lambda(G):= \max_{|\lambda_i| \neq d } |\lambda_i|.$ This value is called the second largest eigenvalue of $A_G.$ We say that the $n \times n$ matrix $A$ is normal if $A^tA=AA^t$ where $A^t$ is the transpose of $A.$ The graph $G$ is normal if $A_G$ is normal. There is a simple way to check whenever $G$ is normal or not. Indeed, for any two vertices $x$ and $y,$ let $N^+(x,y)$ be the set of vertices $z$ such that $\overrightarrow{xz},\overrightarrow{yz}$ are edges, and $N^-(x,y)$ be the set of vertices $z$ such that $\overrightarrow{zx},\overrightarrow{zy}$ are edges. By a direct computation, we have $A_G$ is normal if and only if $|N^+(x,y)| = |N^-(x,y)|$ for any two vertices $x$ and $y.$ 

\medskip\noindent A digraph $G$ is called an $(n,d,\lambda)-digraph$ if $G$ has $n$ vertices, the in-degree and out-degree of each vertex are both $d,$ and $\lambda(G) \le \lambda.$ Let $G$ be an $(n,d,\lambda)-digraph.$
We have the following lemma
\begin{lemma} (\cite{van}). Let $G = (V,E)$ be an $(n,d,\lambda)-digraph.$ For any two sets $B,C \subset V,$ we have
\[ \left| e(B,C) - \frac{d}{n}|B||C|\right| \le \lambda\sqrt{|B||C|}\]
where $e(B,C)$ be the number of ordered pairs $(u,w)$ such that $u\in B, w \in C,$ and $\overrightarrow{uw} \in e(G).$
\end{lemma}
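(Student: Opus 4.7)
The plan is to follow the standard spectral proof of the expander mixing lemma, adapted to the normal digraph setting laid out in the paragraph before the lemma (where normality of $A_G$ is reduced to the condition $|N^+(x,y)|=|N^-(x,y)|$). First I would rewrite the edge count as a bilinear form: if $\mathbf{1}_B,\mathbf{1}_C\in\{0,1\}^V$ are characteristic vectors, then $e(B,C)=\mathbf{1}_B^{\,t}\,A_G\,\mathbf{1}_C$.

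Next I would invoke normality of $A_G$. Since $A_G A_G^{\,t}=A_G^{\,t}A_G$, the spectral theorem for normal matrices supplies an orthonormal basis $v_1,\dots,v_n$ of $\C^V$ consisting of eigenvectors of $A_G$, with (possibly complex) eigenvalues $\lambda_1=d,\lambda_2,\dots,\lambda_n$ satisfying $|\lambda_i|\le\lambda$ for $i\ge 2$. Because every row and column sum of $A_G$ equals $d$, we may take $v_1=\mathbf{1}/\sqrt n$ with eigenvalue $d$.

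Then I would expand $\mathbf{1}_B=\sum_i\alpha_i v_i$ and $\mathbf{1}_C=\sum_i\beta_i v_i$. Orthonormality (Parseval) gives $\sum_i|\alpha_i|^2=|B|$ and $\sum_i|\beta_i|^2=|C|$, while the components along $v_1$ are $\alpha_1=|B|/\sqrt n$ and $\beta_1=|C|/\sqrt n$. Substituting,
\[
e(B,C)=\sum_{i=1}^{n}\lambda_i\,\alpha_i\,\overline{\beta_i}=\frac{d}{n}|B||C|+\sum_{i=2}^{n}\lambda_i\,\alpha_i\,\overline{\beta_i}.
\]
The error term is then controlled by Cauchy--Schwarz together with $|\lambda_i|\le\lambda$ for $i\ge 2$:
\[
\left|\sum_{i=2}^{n}\lambda_i\,\alpha_i\,\overline{\beta_i}\right|\le\lambda\Bigl(\sum_{i\ge 2}|\alpha_i|^2\Bigr)^{1/2}\Bigl(\sum_{i\ge 2}|\beta_i|^2\Bigr)^{1/2}\le\lambda\sqrt{|B||C|},
\]
which is precisely the bound claimed.

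The only delicate point, and in my view the main obstacle, is the step from \emph{having eigenvalues} of $A_G$ to actually \emph{having an orthonormal eigenbasis}: for a general digraph, $A_G$ need not be diagonalizable, which is exactly why the normality hypothesis in the paragraph preceding the lemma is essential. Once normality is in hand, the argument is a routine adaptation of the undirected expander mixing lemma and involves nothing beyond the spectral theorem and Cauchy--Schwarz.
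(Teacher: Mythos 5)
The paper does not prove this lemma: it is cited directly from Vu's paper \cite{van} without proof, so there is no ``paper's own proof'' to compare against. Your proposal is a correct rendition of the standard spectral argument underlying Vu's lemma, and it correctly identifies normality of $A_G$ as the essential hypothesis (without which the adjacency matrix of a digraph need not admit an orthonormal eigenbasis and the argument breaks down). One small bookkeeping point: with the Hermitian inner product $\langle u,v\rangle = u^{*}v$ and writing $\mathbf{1}_B=\sum_i\alpha_i v_i$, $\mathbf{1}_C=\sum_i\beta_i v_i$, the bilinear form expands as $e(B,C)=\mathbf{1}_B^{\,t}A_G\mathbf{1}_C=\sum_i \lambda_i\,\overline{\alpha_i}\,\beta_i$ rather than $\sum_i\lambda_i\,\alpha_i\,\overline{\beta_i}$; this is purely a conjugation convention and does not affect the computation of the main term $\frac{d}{n}|B||C|$ or the Cauchy--Schwarz bound on the tail. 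You might also note, for completeness, that identifying $\lambda(G)$ (defined as $\max_{|\lambda_i|\neq d}|\lambda_i|$) with a bound on all $\lambda_i$ for $i\ge 2$ tacitly assumes the $d$-eigenspace is one-dimensional, i.e.\ that $G$ is connected; the paper makes that same implicit assumption later in the proof of Theorem~4.2.
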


We will consider sum-product digraph in the rest of this section. Let $G=(V,E)$ be the sum-product digraph over $M_n(\mathbb{F}_q)$ defined as follows:
\[ V = M_n(\mathbb{F}_q) \times M_n(\mathbb{F}_q),\]
and there is an edge from $(A,C)$ to $(B,D)$ if
\[ A \cdot B = C + D.\]
In the following theorem, we study the $(n,d,\lambda)$ form of this graph.  
\begin{theorem}\label{maintheory}
The sum-product digraph $G$ is an 
\[ (q^{2n^2},q^{n^2},cq^{n^2-\frac{1}{2}})-digraph\]
for some positive constant $c.$
\end{theorem}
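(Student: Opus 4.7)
The plan is to analyze the spectrum of $A_G$ by Fourier analysis of $A_G A_G^t$ on the abelian group $V = (M_n(\mathbb{F}_q))^2$. The easy parameters are immediate: $|V| = q^{2n^2}$, and for any source $(A,C)$ the outgoing edges are precisely those to $(B, AB - C)$ as $B$ ranges over $M_n(\mathbb{F}_q)$, so both the in-degree and the out-degree equal $q^{n^2}$.

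For the spectral bound, the key observation is that the $((A_1,C_1),(A_2,C_2))$-entry of $A_G A_G^t$ counts common out-neighbors $(B,D)$, and the two edge equations force $(A_1 - A_2) B = C_1 - C_2$ with $D = A_1 B - C_1$ then determined. Hence this entry equals $K(A_1 - A_2,\, C_1 - C_2)$ where
\[
K(M, N) := |\{B \in M_n(\mathbb{F}_q) : MB = N\}|.
\]
Since $K$ depends only on the vertex difference, $A_G A_G^t$ is a convolution operator on $(V,+)$, so its eigenvalues are the Fourier coefficients $\hat K(X,Y)$ with respect to the additive characters $(A,C) \mapsto \psi(\mathrm{tr}(XA + YC))$, where $\psi$ is a fixed nontrivial additive character of $\mathbb{F}_q$.

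Next I would compute $\hat K(X,Y)$ in closed form. Writing $K(M,N) = \sum_B \mathbf{1}[MB = N]$, substituting, and using the trace identity $\mathrm{tr}(XM) + \mathrm{tr}(YMB) = \mathrm{tr}(M(X + BY))$, orthogonality of additive characters on $M_n(\mathbb{F}_q)$ collapses the inner sum over $M$ and yields
\[
\hat K(X, Y) = q^{n^2} \cdot |\{B \in M_n(\mathbb{F}_q) : BY = -X\}|.
\]
If $\mathrm{rank}(Y) = r$, then the linear map $B \mapsto BY$ has image the matrices whose rows lie in the row space of $Y$, with fiber size $q^{n(n-r)}$. Thus $\hat K(X,Y)$ is either $0$ or equals $q^{2n^2 - nr}$ for some $r \in \{0, 1, \dots, n\}$. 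The case $r=0$ forces $Y = 0$ and then $X = 0$, producing the principal eigenvalue $d^2 = q^{2n^2}$, while every other nonzero value is at most $q^{2n^2 - n}$, attained at $r = 1$.

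Taking square roots, the second largest singular value of $A_G$ is at most $q^{n^2 - n/2}$, which is bounded by $q^{n^2 - 1/2}$ for every $n \ge 1$; this gives the claim with $c = 1$. The main technical obstacle is the Fourier bookkeeping that produces the clean formula for $\hat K(X,Y)$ together with the rank-based case analysis; once these are in place, the bound is immediate. A mild subtlety is that $G$ is not normal in general (one can verify $|N^+(x,y)| \neq |N^-(x,y)|$ for suitable pairs), so the quantity $\lambda(G)$ entering the preceding lemma should be read as the second singular value of $A_G$, which is exactly what the above analysis controls.
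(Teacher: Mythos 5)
Your proof is correct and takes a genuinely different route from the paper. The paper computes $MM^t$ entry by entry and decomposes it into several summands $E_{0n}, E_{mk}, F_{mk}, H_{mk}$ indexed by the ranks of $A_1-A_2$ and $C_1-C_2$, bounds the degree of each summand separately, and then invokes the triangle inequality on eigenvalues; this yields $\lambda_2(MM^t)\ll q^{2n^2-1}$ and hence $\sigma_2\ll q^{n^2-1/2}$. You instead observe that $MM^t$ is a convolution kernel on the additive group $M_n(\mathbb{F}_q)^2$, so its spectrum is given \emph{exactly} by the Fourier coefficients $\hat K(X,Y)=q^{n^2}\,|\{B:BY=-X\}|$; the rank argument then gives $\lambda_2(MM^t)\le q^{2n^2-n}$, i.e.\ $\sigma_2\le q^{n^2-n/2}$, which is strictly stronger than the paper's bound for $n\ge2$ (the paper's triangle inequality is lossy because the summands cancel). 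You also correctly point out that $G$ is \emph{not} normal, contrary to what the paper asserts: for instance with $n=2$, taking $A_1-A_2=E_{11}$ and $C_1-C_2$ with rows $(1,1),(0,0)$, the equation $(A_1-A_2)B=C_1-C_2$ has $q^2$ solutions while $A(A_1-A_2)=C_1-C_2$ has none, so $|N^+|\ne|N^-|$. This is an error in the paper that you have caught; the paper's step $MM^t\vec v_2=|\lambda_2|^2\vec v_2$ relies on normality and is unjustified as written. Your reading of $\lambda(G)$ as the second singular value $\sigma_2(A_G)$ is the correct repair, since the directed expander mixing lemma bounds $\bigl|e(B,C)-\tfrac dn|B||C|\bigr|$ by $\|A_G-\tfrac dn J\|_{\mathrm{op}}\sqrt{|B||C|}=\sigma_2(A_G)\sqrt{|B||C|}$, and your Fourier computation controls precisely $\sigma_2(A_G)^2=\lambda_2(A_GA_G^t)$.
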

To prove Theorem \ref{maintheory},  we first need the following lemma:
\begin{lemma}\label{lemma 1}
Denote $R_m=\left\{A \in M_n\left(\mathbb{F}_q\right) : \mathtt{rank}(A)=m \right\},$ then  
\[ \left| R_m \right| \le d.q^{2mn-m^2}.\]
for some positive constant $d.$
\end{lemma}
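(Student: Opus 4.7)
The plan is to count $R_m$ via the rank factorization $A = BC$, where $B$ is an $n \times m$ matrix of rank $m$ and $C$ is an $m \times n$ matrix of rank $m$; any two such factorizations of a fixed $A$ differ by an element of $GL_m(\Fq)$ acting on the right of $B$ and the inverse on the left of $C$. This yields the exact identity
\[ |R_m| = \frac{N_{n,m}^2}{|GL_m(\Fq)|}, \]
where $N_{n,m}$ is the number of $n \times m$ matrices over $\Fq$ of full column rank $m$. Counting columns one at a time (each new column must avoid the span of the previous ones), this is the standard product $N_{n,m} = \prod_{i=0}^{m-1}(q^n - q^i)$, and by symmetry the count for $m \times n$ matrices of rank $m$ is the same.

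Next I would estimate numerator and denominator separately. The trivial bound $q^n - q^i \le q^n$ gives $N_{n,m}^2 \le q^{2mn}$. For the denominator, factoring out $q^{m^2}$ from $\prod_{i=0}^{m-1}(q^m - q^i)$ yields
\[ |GL_m(\Fq)| = q^{m^2} \prod_{j=1}^{m}\left(1 - q^{-j}\right) \ge q^{m^2} \prod_{j=1}^{\infty}\left(1 - q^{-j}\right). \]
Since the series $\sum q^{-j}$ converges, the infinite product converges, and for any prime power $q \ge 2$ it is bounded below by the absolute positive constant $c_0 := \prod_{j=1}^{\infty}(1 - 2^{-j}) > 0$, independent of $m$, $n$, and $q$.

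Combining the two bounds gives
\[ |R_m| \le \frac{q^{2mn}}{c_0 \cdot q^{m^2}} = d \cdot q^{2mn - m^2}, \]
with $d := 1/c_0$, which is the claimed inequality. I do not expect any serious obstacle here; the one point deserving care is ensuring that $d$ is truly independent of $m$, $n$, and $q$, which is why I would bound the finite product $\prod_{j=1}^{m}(1 - q^{-j})$ from below by the convergent infinite product evaluated at the worst case $q = 2$, rather than estimating factor by factor (which would introduce an $m$-dependence).
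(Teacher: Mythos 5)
Your proof is correct, and it takes a genuinely different route from the paper. The paper bounds $|R_m|$ directly by over-counting: it chooses which $m$ of the $n$ columns form a spanning set for the column space ($\binom{n}{m}$ choices), counts how to fill those columns with linearly independent vectors $\bigl(\prod_{i=0}^{m-1}(q^n-q^i)\bigr)$, and then counts the remaining $n-m$ columns as arbitrary linear combinations $\bigl(q^{m(n-m)}\bigr)$, arriving at $|R_m|\le \binom{n}{m}q^{2mn-m^2}$; the constant $\binom{n}{m}$ is admissible only because $n$ is fixed throughout the paper. You instead use the rank factorization $A=BC$ together with the observation that all such factorizations of a fixed $A$ form a single $GL_m(\Fq)$-orbit, which gives the exact identity $|R_m|=N_{n,m}^2/|GL_m(\Fq)|$, and then you bound numerator and denominator separately. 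Your argument produces an absolute constant $d=1/\prod_{j\ge1}(1-2^{-j})$, independent of $n$, $m$, and $q$, which is slightly stronger than the paper's $n$-dependent constant, though for the paper's purposes (fixed $n$, $q\to\infty$) the distinction is immaterial. Both proofs are sound; yours is cleaner at the cost of invoking the rank-factorization orbit count rather than the elementary direct enumeration.
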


\begin{proof}
[Proof of Lemma \ref{lemma 1}.] By definition of $R_m,$ for each $A \in R_m,$ there exist $m$ columns 
\[v_{i_1},v_{i_2},\dots,v_{i_m}, \quad  1\le i_1<i_2<\dots<i_m\le n\] where $v_{i_j} \in \mathbb{F}^n_q$ such that $\left\{v_{i_j}\right\}_{j=1}^m$ is linearly independent and the other columns can be written as a linear combination of $\left\{v_{i_j}\right\}_{j=1}^m.$  \\[10pt]
There are $(q^n-1)$ possibilities for $v_{i_1},$ once we pick $v_{i_1}, v_{i_2}$ has $(q^n -q)$ possibilities, once we have $v_{i_1}$ and $v_{i_2}, v_{i_3}$ has $(q^n-q^2)$ possibilities,$\dots$, $v_{i_m}$ has $(q^n-q^{m-1})$ possibilities. On the other hand, the $n-m$ other columns can be written as a linear combination of $\left\{v_{i_j}\right\}_{j=1}^m,$ each has $q^m$ possibilities. We can choose $m$ of $n$ column vectors, which are linearly independent. Note that two different ways can be just one matrix, so we have
\begin{align*}
|R_m| \le \binom{n}{m} \left(q^n-1\right)\left(q^n-q^2\right)\dots\left(q^n-q^{m-1}\right)\left(q^m\right)^{n-m} \le d.q^{2mn-m^2}.
\end{align*}
for some positive constant $d.$
\end{proof}

\begin{proof}
[Proof of theorem \ref{maintheory}.] It is obvious that the order of $G$ is $q^{2n^2},$ because $\left| M_n(\mathbb{F}_q)\right|= q^{n^2}$ and so $\left|M_n(\mathbb{F}_q) \times M_n(\mathbb{F}_q)\right|=q^{2n^2}.$ Next, we observe that $G$ is a regular digraph of in-degree and out-degree $q^{n^2}.$ Indeed, for any vertex $(A,C) \in V,$ if we choose each matrix $B \in M_n(\mathbb{F}_q),$ there exists a unique $D=A \cdot B - C$ such that 
\[ A \cdot B = C + D.\]
Hence, the out-degree of any vertex in $G$ is $\left|M_n(\mathbb{F}_q)\right|,$ which is $q^{n^2}.$ The same holds for the in-degree of each vertex. \\[10pt]
Let $M$ be the adjacency matrix of $G.$ In the next step, we will bound the second largest eigenvalue of $G.$ To this end, we first need to show that $G$ is a normal digraph. It is known that if $M$ is a normal matrix and $\beta$ is an eigenvalue of $M,$ then the complex conjugate $\overline{\beta}$ is an eigenvalue of $M^t.$ Hence, $|\beta|^2$ is an eigenvalue of $MM^t$ and $M^tM.$ In other words, in order to bound $\beta,$ it is enough to bound the second largest eigenvalue of $MM^t.$ \\[10pt]
We are now ready to show that $M$ is normal. Indeed, let $(A_1,C_1)$ and $A_2,C_2)$ be two different vertices, we now count the of the neighbors $(X,Y)$ such that there are directed edges from $(A_1,C_1)$ and $(A_2,C_2)$ to $(X,Y).$ This number is $N^+((A_1,C_1),(A_2,C_2)).$ We first have
\begin{equation}\label{eq42.1}
A_1X=C_1 + Y, \quad A_2X = C_2 + Y. 
\end{equation}
This implies that 
\begin{equation}\label{eq42.2}
(A_1-A_2)X=C_1 - C_2. 
\end{equation}
Notice that the number of the solutions $X$ to the system \eqref{eq42.2} is the same as that of the solution $(X,Y)$ to the system \eqref{eq42.1}, since if we fix a solution $X$ to \eqref{eq42.2}, then $Y$ in \eqref{eq42.1} is uniquely determined. We now fall into one of the following cases. \\[10pt]
\textbf{Case 1:} If $\det(A_1-A_2) \neq 0,$ then there exists unique $X$ such that $(A_1-A_2)X=C_1-C_2.$ Thus the system \eqref{eq42.1} has only one solution in this case. \\[10pt]
\textbf{Case 2:} If $\det(A_1-A_2) = 0,$ and $\det(C_1-C_2) \neq 0,$ then system $\eqref{eq42.1}$ has no solution. \\[10pt]
\textbf{Case 3:} If $\det(A_1-A_2) = 0,$ and $\det(C_1-C_2)=0,$ then we need to further consider different situations as follows:
\begin{itemize}
\item[1.] If $\mathtt{rank}(A_1-A_2) < \mathtt{rank}(C_1-C_2),$ then system $\eqref{eq42.1}$ has no solution.

\item[2.] If $\mathtt{rank}(A_1-A_2)=0,$ and $\mathtt{rank}(C_1-C_2)=0,$ then we have $A_1=A_2,C_1=C_2.$ This contradicts our assumption that $(A_1,C_1) \neq (A_2,C_2).$ Thus, we can rule out this case.

\item[3.] If $\mathtt{rank}(A_1-A_2)=m,$ and $\mathtt{rank}(C_1-C_2)=k$ where $0 \le k \le m < n.$ As we have known that if the equation $(A_1-A_2)X=C_1-C_2$ has solutions, then the number of its solutions is equal to the number of solutions of $(A_1-A_2)X=0.$ Now, we are ready to count the number of solutions of $(A_1-A_2)X=0$ if the system has solution.

Put $A=A_1-A_2, X = [x_1,x_2,\dots,x_n]$ for some column vectors $x_1,x_2,\dots,x_n \in \mathbb{F}_q^n,$ then $A$ is a matrix in $M_n(\mathbb{F}_q)$ and $\mathtt{rank}(A)=m.$ We just need to estimate the number of solutions of $Ax_1=0$ because the number of solutions of $AX=0$ is equal to $n^{th}$ power of the number of solutions of $Ax_1=0.$ It is known that the set $L$ of all solutions of $Ax_1=0$ is a vector subspace of $\mathbb{F}_q^n$ and has the dimension $\dim L = n - \mathtt{rank}(A) = n-m.$ Hence, we have 
\[ |L|=q^{n-m}.\] 
Therefore, the equation $AX=0$ has $q^{n(n-m)}$ solutions if this equation has solutions. 
\end{itemize}
Since the same argument works for the case of $N^-((A_1,C_1),(A_2,C_2)),$ we obtain the same value for $N^-((A_1,C_1),(A_2,C_2)).$ In short, $M$ is normal.\\[10pt]
As we discussed above, in order to bound the second largest eigenvalue of $M,$ it is enough to bound the second largest value of $MM^t.$ Based on previous calculations, we have
\begin{align*}
MM^t &=(q^{n^2}-1)I+J-E_{0n}-\sum_{0\le m<k<n}E_{mk} - \sum_{0 \le k \le m < n; (m,k) \neq (0,0)}F_{mk} \\ 
&+ \sum_{0 \le k \le m < n; (m,k) \neq (0,0)}\left(q^{n(n-m)}-1\right)H_{mk},
\end{align*}
where $I$ is the identity matrix, $J$ denotes the all-one matrix and the others defined as follows. $E_{0n}$ is the adjacency matrix of the graph $\mathcal{G}_1$ defined as follows: 
\[V(E_{0n}) = M_n(\mathbb{F}_q) \times M_n(\mathbb{F}_q),\]
and there is an edge between $(A_1,C_1)$ and $(A_2,C_2)$ if 
\[ \det(A_1-A_2)=0, \quad \det(C_1-C_2) \neq 0.\]
$E_{mk}$ is the adjacency matrix of the graph $\mathcal{G}_{mk}, 0 \le m < k <n$ defined as follows: 
\[ V(E_{mk}) = M_n(\mathbb{F}_q) \times M_n(\mathbb{F}_q),\]
and there is an edge between $(A_1,C_1)$ and $(A_2,C_2)$ if 
			\[ \mathtt{rank}(A_1-A_2)=m, \quad \mathtt{rank}(C_1-C_2)=k.\]
$F_{mk}$ is the matrix adjacency of the graph $\mathcal{G}_{mk}, 0 \le k \le m<n, (m,k) \neq (0,0)$ defined as follows:
\[ V(F_{mk}) = M_n(\mathbb{F}_q) \times M_n(\mathbb{F}_q),\]
and there is an edge between $(A_1,C_1)$ and $(A_2,C_2)$ if
\[ \mathtt{rank}(A_1-A_2)=m, \quad \mathtt{rank}(C_1-C_2)=k \quad \text{and the equation $AX=0$ has no solution.}\]
$H_{mk}$ is the matrix adjacency of the graph $\mathcal{GP}_{mk}, 0 \le k \le m<n, (m,k) \neq (0,0)$ defined as follows:
\[ V(H_{mk}) = M_n(\mathbb{F}_q) \times M_n(\mathbb{F}_q),\]
and there is an edge between $(A_1,C_1)$ and $(A_2,C_2)$ if
\[ \mathtt{rank}(A_1-A_2)=m, \quad \mathtt{rank}(C_1-C_2)=k \quad \text{and the equation $AX=0$ has solutions.}\]

Suppose $\lambda_2$ is the second largest eigenvalue of $M$ and $\overrightarrow{v_2}$ is the corresponding eigenvector. Since $G$ is a regular graph, we have $J \cdot \overrightarrow{v_2}=0.$ (Indeed, since $G$ is regular, it always has $(1,1,\dots,1)$ as an eigenvector with eigenvalue being its regular-degree. Moreover, since the graph $G$ is connected, this eigenvalue has multiplicity one. Thus any other eigenvectors will be orthogonal to $(1,1,\dots,1)$ which in turns gives us $J\cdot \overrightarrow{v_2}=0).$ Since $MM^t\overrightarrow{v_2}=|\lambda_2|^2\overrightarrow{v_2},$ we get
\begin{align*}
|\lambda|^2 \overrightarrow{v_2} &=(q^{n^2}-1)\overrightarrow{v_2}-E_{0n}\overrightarrow{v_2}-\left(\sum_{0\le m<k<n}E_{mk}\right)\overrightarrow{v_2} - \left(\sum_{0 \le k \le m < n; (m,k) \neq (0,0)}F_{mk}\right)\overrightarrow{v_2} \\ 
&+ \left(\sum_{0 \le k \le m < n; (m,k) \neq (0,0)}\left(q^{n(n-m)}-1\right)H_{mk}\right)\overrightarrow{v_2}.
\end{align*}
Thus $\overrightarrow{v_2}$ is an eigenvector of 
\begin{align*}
(q^{n^2}-1)I-E_{0n}-\sum_{0\le m<k<n}E_{mk} - \sum_{0 \le k \le m < n; (m,k) \neq (0,0)}F_{mk} 
+ \sum_{0 \le k \le m < n; (m,k) \neq (0,0)}\left(q^{n(n-m)}-1\right)H_{mk}.
\end{align*}
Using the Lemma \ref{lemma 1}, one can easily to check that for any $0 \le k<n, 0 \le m < n$ and  $(m,k) \neq (0,0),$ the graph $\mathcal{G}_{mk}$ is $d_{mk}-\text{regular}$ for some $d_{mk}$ where 
\[ d_{mk} \ll q^{2mn-m^2+2nk-k^2} \ll q^{2n^2-2},\]
and the graph $\mathcal{G}_1$ is $d_1-\text{regular},$ where 
\[ d_1 \ll q^{2n^2-1}.\]
Now, we will bound the number $S$ of pairs $(A,C) \in M_n(\mathbb{F}_q)\times M_n(\mathbb{F}_q)$, which satisfies $\mathtt{rank}(A)=m, \mathtt{rank}(C)=k (m \ge k)$ and the equation $AX = C$ has solutions. One considers $AX=C$ where $A$ has some row vectors $a_{i_1},a_{i_2},\dots,a_{i_m} \in \mathbb{F}^n_q,$ which are linearly independent, then each other rows of $A$ can be written as linear combination of $\left\{a_{i_x}\right\}_{x=1}^{m}:$
\[a_j = \alpha_{1j}a_{i_1}+\alpha_{2j}a_{i_2} + \dots + \alpha_{mj} a_{i_m}.\] 
Hence, we get an equivalent equation of $AX=C$ is $A'X=C'$ where $A'$ is obtained from $A$ by keeping $a_{i_1},a_{i_2},\dots,a_{i_m}$ and replacing the other by zero-vector, $C'$ is obtained from $C$ by keeping $c_{i_1},c_{i_2},...,c_{i_m}$ and replacing each other row vectors $c_j$ by 
\[ c'_j = c_j - \alpha_{1j}c_{i_1}-\alpha_{2j}c_{i_2} - \dots - \alpha_{mj} c_{i_m}.\] 
It is clear that $A'X=C'$ has solutions if and only if $c'_j = 0, \forall j \notin \left\{i_1,i_2,\dots,i_m\right\}.$ In other words, $AX=C$ has solutions if only if 
\[  c_j = \alpha_{1j}c_{i_1}+\alpha_{2j}c_{i_2} + \dots + \alpha_{mj} c_{i_m}, \forall j \notin \left\{i_1,i_2,\dots,i_m\right\}. \]
It follows that for each $A,$ the number $N_C$ of matrices $C,$ which satisfies the equation $AX=C$ has solutions only depends on the number of row vectors $c_{i_1},c_{i_2},\dots,c_{i_m}$ with  $\mathtt{rank}[c_{i_1},c_{i_2},\dots,c_{i_m}] = k.$ Using the same argument in proof of Lemma \ref{lemma 1}, we get
\[ N_C \leq \binom{m}{k}\left(q^n-1\right)\left(q^n-q\right)\dots\left(q^n-q^{k-1}\right).q^{k(m-k)}.\]
Hence,
\[ N_C \ll q^{nk+mk-k^2}.\]
Using Lemma \ref{lemma 1} and the boundary of $N_C,$ we have
\[ S \ll q^{2mn-m^2}N_C \ll q^{2mn+nk+mk-m^2-k^2}.\]
Therefore, for any $0\le k \le m$ and $(m,k) \neq (0,0),$ the graph $\mathcal{GP}_{mk}$ is $y_{mk}-\text{regular}$ for some $y_{mk}$ where 
\[ y_{mk} \ll q^{2mn+nk+mk-m^2-k^2}.\]
Note that 
\[q^{n(n-m)}.q^{2mn+nk+mk-m^2-k^2}=q^{2n^2-(m^2+n^2+k^2-kn-km-mn)} \le q^{2n^2-1}, \forall 0 \le k \le m <n;\]
 since eigenvalues of a sum of matrices are bounded by the sum of largest eigenvalue of the summands, we obtain 
\[|\lambda_2| \ll q^{n^2-\frac{1}{2}}\]
which completes the proof of the theorem.
\end{proof}
\section{ Proof of theorem \ref{theo3}, \ref{theo4}, \ref{theo5}, \ref{theo6}}
\begin{proof}
[\bf Proof of Theorem \ref{theo3}.]

Since $|A||B||C| \gg q^{3n^2-1},$ we have $A \gg q^{n^2-1}.$ On the other hand, the number of matrices in $M_n(\mathbb{F}_q)$ with zero-determinant is 
\[ q^{n^2}-(q^n-1)(q^n-q) \dots (q^n-q^{n-1})=q^{n^2-1}+o(q^{n^2-1}).\] 
Thus, without loss of generality, we may assume that $A \subset GL_n(\mathbb{F}_q).$ Define 
\[ U:= \left\{ (a^{-1},b): a \in A, b \in B\right\}, V:=A(B+C) \times C\]
as subsets of vertices in sum-product digraph $G.$ It is clear that $U=|A||B|$ and $V=|C|.|A(B+C)|.$ \\[5pt]
For each vertex $(a^{-1},b)$ in $U,$ it has at least $|C|$ neighbors $(a(b+c),c) \in V.$  Therefore, the number of edges between $U$ and $V$ in the digraph $G$ is at least $|A||B||C|.$ On the other hand, it follows from Theorem \ref{maintheory} and Lemma \ref{lemma 1} that
\[ e(U,V) \ll \frac{|A||B||C||A(B+C)|}{q^{n^2}}+q^{n^2-1/2}(|A||B||C|)^{1/2}\sqrt{|A(B+C)|}.\]
So,
\[ |A||B||C| \ll \frac{|A||B||C||A(B+C)|}{q^{n^2}}+q^{n^2-1/2}(|A||B||C|)^{1/2}\sqrt{|A(B+C)|}.\]
Solving this inequality, we obtain
\[ |A(B+C)| \gg \min\left\{ \frac{|A||B||C|}{q^{2n^2-1}},  q^{n^2}\right\}.\]
and the theorem follows.
\end{proof}

\begin{proof}
[\bf Proof of Theorem \ref{theo4}.]
Define
\[ U := \left\{(b,-a):b \in B, a \in A \right\}, V := C \times (A+BC)\]
as subsets of vertices in the sum-product digraph $G.$ It is clear that $|U| = |A||B|$ and $|V| = |C||A + BC|.$ \\[8pt]
One can check that each vertex $(b,-a)$ in $U$ has at least $|C|$ neighbors $(c,a + b \cdot c) \in  V.$\\[8pt]
This implies that $e(U,V ) \ge |A||B||C|.$ On the other hand, it follows from Theorem \ref{maintheory} and Lemma \ref{lemma 1} that 
\[ e(U,V) \ll \frac{|A||B||C||A+BC|}{q^{n^2}}+q^{n^2-1/2}(|A||B||C|)^{1/2}\sqrt{|A+BC|}.\]
So, 
\[ |A||B||C| \ll \frac{|A||B||C||A+BC|}{q^{n^2}}+q^{n^2-1/2}(|A||B||C|)^{1/2}\sqrt{|A+BC|}.\]
Solving this inequality, we get
\[ |A+BC| \gg \min\left\{ \frac{|A||B||C|}{q^{2n^2-1}},  q^{n^2}\right\}\]
which completes the proof of the theorem.
\end{proof}

\begin{proof}
[\bf Proof of Theorem \ref{theo5}.]
Let $M$ be an arbitrary matrix. We will show that there exist matrices $a_1,a_2,a_3,a_4 \in A$ such that
\[ a_1 \cdot a_2 + a_3 +a_4 = M.\]
Define
\[ U := \left\{ (a_1,-a_3+M): a_1,a_3 \in A\right\},\]
and 
\[ V := \left\{ (a_2,-a_4): a_2,a_4 \in A \right\}\]
as subsets of vertices in the sum-product digraph $G$ over $M_n(\mathbb{F}_q).$\\[8pt]
It is clear that if there is an edge between $U$ and $V,$ then there exist matrices $a_1,a_2,a_3,a_4 \in A$ such that
\[ a_1\cdot a_2 + a_3 + a_4 = M.\]
It follows from Theorem \ref{maintheory} and Lemma \ref{lemma 1} that
\[\left|e(U,V) - \frac{|U||V|}{q^{n^2}}\right| \ll q^{n^2-1/2}\sqrt{|U||V|}.\]
Since $|U|=|V|=|A|^2,$ we have 
\[ e(U,V) > 0 \]
under the condition $|A| \gg q^{n^2-1/4}.$
\end{proof}

\begin{proof}
[\bf Proof of Theorem \ref{theo6}.]
Since $A \gg q^{n^2-1},$ we assume that $A \subset GL_n(\mathbb{F}_q).$ We define
\[ U := \left(A+A\right)\times \left(AA\right), V := \left\{ (a,a \cdot b): a,b \in A\right\}\]
as subsets of vertices in the sum-product graph $G.$ \\[8pt]
It is clear that 
\[ |U| = |AA||A+A|, |V| = |A|^2.\]
Moreover, for each vertex $(a,a \cdot b) \in V,$ it has at least  $|A|$ neighbors $(c+b,a \cdot c)$ in $U.$ Thus the number of edges between $U$ and $V$ is at least $|A|^3.$ \\[8pt]
On the other hand, applying Theorem \ref{maintheory} and Lemma \ref{lemma 1}, we have 
\[e(U,V ) \ll \frac{|U||V|}{q^{n^2}} + q^{n^2-1/2}\sqrt{|U||V|}.\]
Hence, we get 
\[ |A|^3 \ll \frac{|AA||A+A||A|^2}{q^{n^2}}+q^{n^2-1/2}|A|\sqrt{|AA||A+A|}.\]
Set $x =\sqrt{|A + A||AA|}.$ It follows that
\[ |A|x^2 + q^{2n^2-1/2}x - q^{n^2}|A|^2 \ge 0.\]
Solving this inequality, we get
\[ x \ge \frac{-q^{2n^2-1/2}+\sqrt{q^{4n^2-1}+4q^{n^2}|A|^3}}{2|A|}
\]
which implies that
\[ x \gg \min\left\{ \frac{|A|^2}{q^{n^2-1/2}}, q^{n^2/2}|A|^{1/2} \right\}.\]
On the other hand, we observe that 
\[ \max\left\{ |A+A|,|AA|\right\} \ge x\]
which completes the proof of Theorem \ref{theo6}. 
\end{proof}

\bibliographystyle{amsplain}

\end{document}